\documentclass{aupl}  

\usepackage{
amsfonts,
latexsym,
amssymb,
verbatim,
mathrsfs,
}

\numberwithin{equation}{section}

\newcommand{\labbel}{\label}

\newtheorem{theorem}{Theorem}[section]
\newtheorem{lemma}[theorem]{Lemma}

\newtheorem{proposition}[theorem]{Proposition} 
 
\newtheorem{corollary}[theorem]{Corollary}

\newtheorem*{claim*}{Claim}

\newtheorem*{theorem*}{Theorem}
\newtheorem*{proposition*}{Proposition}
\newtheorem*{corollary*}{Corollary}
\newtheorem*{lemma*}{Lemma}
\newtheorem*{scholion*}{Scholion}

\theoremstyle{definition}

\theoremstyle{remark}
\newtheorem{remark}[theorem]{Remark}
\newtheorem{remarks}[theorem]{Remarks}
\newtheorem*{remark*}{Remark}
\newtheorem*{remarks*}{Remarks}
 
\newtheorem{example}[theorem]{Example}

\newtheorem*{observation*}{Observation}

 \allowbreak

 \allowdisplaybreaks[4]

\hyphenation{Ber-man}

\begin{document}
 
\title
{The distributivity spectrum of Baker's  variety}

\author{Paolo Lipparini} 
\address{Dipartimento Fornace di Matematica\\Viale della  Ricerca
 Scientifica\\Universit\`a di Roma ``Tor Vergata'' 
\\I-00133 ROME ITALY}
\urladdr{http://www.mat.uniroma2.it/\textasciitilde lipparin}

\keywords{Baker's variety; 
J{\'o}nsson distributivity spectrum;
Congruence identity; 
Relation identity; 
Near-unanimity term; Edge term}

\subjclass[2010]{08B10; 06B75}
\thanks{Work performed under the auspices of G.N.S.A.G.A. Work 
 supported by PRIN 2012 ``Logica, Modelli e Insiemi''.
The author acknowledges the MIUR Department Project awarded to the
Department of Mathematics, University of Rome Tor Vergata, CUP
E83C18000100006.}

\begin{abstract}
For every $n$, we evaluate the smallest $k$ such that 
the congruence inclusion  
$\alpha (\beta \circ_n \gamma  )
\subseteq 
\alpha \beta \circ _{k} \alpha \gamma $ 
holds in a variety of reducts
of lattices  introduced by K. Baker.
We also  study  
varieties with a near-unanimity term and
discuss identities dealing with reflexive and admissible relations.
\end{abstract} 

\maketitle

\section{Introduction} \labbel{intro} 

Baker \cite{B} considered
the variety  generated by 
polynomial  reducts of lattices in which 
the only basic  operation
is $b$ defined by 
 $ b(a,c,d) =a (c + d)$.
 Here  juxtaposition  denotes meet and
$+$  denotes join. In a few cases, for clarity, the meet of $a$ and  $c$ 
shall be denoted by $ a \cdot c$. 
We shall denote the above variety by $\mathcal B$
and we shall call it  \emph{the Baker's variety}, but let us mention
that \cite{B} contains a more general study of varieties 
which arise as  reducts of lattices; see, in particular, \cite[Theorem 2]{B}. 
Notice that, in every algebra in 
$\mathcal B$, the term 
$x \cdot y = b(x,y,y)$
provides a semilattice operation; in particular, we can consider 
any algebra in $\mathcal B$  
as an ordered set in a natural way.
A related variety
is obtained by taking polynomial reducts of lattices in which
 the only basic operation
is $u$ defined by
$u(a_1, a_2, a_3, a_4) = \prod _{j \neq j } (a_i + a_j) $,
where the indices on the product vary on the set
$\{ 1,2,3,4\}$.  
We shall denote this variety by $\mathcal N_4$.
Notice that $u$ is a near-unanimity term in $\mathcal N_4$ and  
that the position $b(a,c,d) = u(a,a,c,d)$
provides an interpretation of  $\mathcal B$ in
$\mathcal N_4$.

Baker showed  that $\mathcal B$ is $4$-distributive 
but not $3$-distributive.
Recall that a variety $\mathcal V$ is
\emph{$m$-distributive}, or $\Delta_m$,  if $\mathcal V$ satisfies the congruence 
identity
$ \alpha ( \beta \circ \gamma  ) \subseteq \alpha \beta 
\circ _m\alpha \gamma $. 
In the above formula, $\alpha, \beta, \dots$
are intended to be variables for congruences of
some algebra in $\mathcal V$,
juxtaposition denotes intersection and 
  we have used the 
 shorthand
$ \beta \circ _m \gamma $  for
$ \beta \circ \gamma \circ \beta \dots$
with $m$ factors, that is, 
with $m-1$ occurrences of $\circ$.
If, say, $m$ is odd, we sometimes  write
$ \beta \circ \gamma \circ  {\stackrel{m}{\dots}} \circ \beta $
in place of  $ \beta \circ _m \gamma $
in order to make clear that $\beta$ is the last factor.
Conventionally,
$ \beta \circ _0 \gamma  = 0$, the minimal congruence
of the algebra under consideration; 
otherwise the reader might always suppose that
$m \geq 1$. 
We refer to Baker \cite{B}, J{\'o}nsson  
\cite{CV} or  Lipparini \cite{jds} 
for other  unexplained notions and notations.

The  original definition
of $m$-distributivity  
involves the existence of a certain number of terms
introduced by J{\'o}nsson \cite{J}; J{\'o}nsson terms are exactly
the terms arising from the Maltsev condition associated 
to $ \alpha ( \beta \circ \gamma  ) \subseteq \alpha \beta 
\circ _m \alpha \gamma $.
Here it will be more convenient to 
express results by means of
 congruence identities rather than 
terms. See \cite{jds} for a more detailed 
discussion and further references.
J{\'o}nsson proved that a variety is distributive
if and only if it is $m$-distributive, for some $m$. 
It follows from J{\'o}nsson's proof that, for every $n$ and  $m$, 
there is some $k$ such that  every  $m$-distributive variety 
 satisfies the congruence  identity
 $\alpha (\beta \circ_n \gamma  )
\subseteq 
\alpha \beta \circ _{k} \alpha \gamma $.
We initiated the study of the related
``J{\'o}nsson distributivity spectra'' in 
\cite{jds}. Here we shall evaluate exactly the distributivity spectra 
of $\mathcal N_4$ and of Baker's variety $\mathcal B$.   
We shall also show that we get exactly the same spectra if we consider
the corresponding reducts of \emph{distributive} lattices, call such  reducts 
 $\mathcal N_4^d$ and $\mathcal B ^d$.   

Relying heavily on Kazda,  Kozik,   McKenzie and  Moore \cite{adjt},
 we  observed in \cite{jds} 
that congruence distributive varieties satisfy also identities
of the form 
$\alpha (R \circ_n T)
\subseteq \alpha R \circ _{k'} \alpha T$,
where $R$ and $T$ denote reflexive and admissible relations. 
In Sections \ref{relid} and \ref{two}
  we shall find the best bounds for identities of this kind
in $\mathcal B$ and  $\mathcal N_4$;
 moreover, we shall  show that in the case of 
$\mathcal B$ and  $\mathcal N_4$ it is possible to take
 $\alpha$, too, as an 
admissible relation.
As far as relation identities are concerned,
 $\mathcal B$ and
$\mathcal N_4$ exhibit a subtly different behavior.
This partially confirms 
the suggestion 
implicit in \cite[p.\ 370]{CV} and explicitly
advanced  in \cite{uar} 
 that  
the study of relation identities 
might provide a finer classification of varieties
(in particular, congruence distributive varieties),
 in comparison with the study of congruence identities alone.

The relation identities found in 
Sections \ref{relid} and \ref{two} solve also some earlier problems.
In \cite{contol} we have showed that, under a fairly general assumption,
 a congruence identity 
is equivalent to the same identity when considered 
for representable tolerances, instead.
In Remark \ref{contol!} we show that the assumption
of representability of tolerances is necessary in the above equivalence. 

It is known
 \cite{chl,ricm} that the identities
$\alpha ( \Theta  \circ \Theta) \subseteq \alpha \Theta \circ_{k'} \alpha \Theta $
and  $\alpha (R \circ R) \subseteq \alpha R \circ_k \alpha R $,
for some $k$, $k'$, both characterize congruence modularity,
where $\Theta$ denotes a tolerance.
Remarks \ref{tolmod} and \ref{tolmodrel} show that, for a variety $\mathcal V$,
the best values of $k$ or  $k'$ in the above identities are not determined 
by the Day modularity level of $\mathcal V$. It is an open problem to find 
the example of a variety for which the best values for 
$k$ and  $k'$ above are distinct.

Section \ref{nearu} contains a few remarks about 
relation identities 
satisfied by varieties with a near-unanimity term 
and by varieties with an edge term.
Here we are dealing with the general case, not
with specific examples such as  $\mathcal {N}_4$. 
Further remarks  are contained in Section \ref{fur}.
Among other, and
following the lines of \cite{B}, 
we consider identities satisfied by 
arbitrary polynomial reducts of lattices. 
We also consider polynomial reducts 
of Boolean algebras.

\section{The distributivity spectra of 
$\mathcal B$ and $\mathcal N_4$} \labbel{thed} 

Recall that
$\mathcal B$ is the variety  generated by 
polynomial  reducts of lattices in which 
the only basic  operation
is  
 $ b(a,c,d) =a (c + d)$ and 
that $\mathcal N_4$ is defined similarly with respect to  
the operation
$u(a_1, a_2, a_3, a_4) = \prod _{j \neq j } (a_i + a_j) $.
The varieties obtained 
by considering only reducts of distributive lattices are denoted,
correspondingly, by
 $\mathcal B ^d$ and  $\mathcal N_4^d$.   

\begin{theorem} \labbel{bds}
Suppose that  $n \geq 2$
and 
 $\mathcal V $ is either 
$ \mathcal B $, $ \mathcal B ^d $, $  \mathcal N_4$
or  $\mathcal N_4^d$. 
Then 
 $\mathcal V$ satisfies
the following
congruence 
identities:
\begin{align}
 \labbel{1}
\alpha (\beta \circ_n \gamma  )
& \subseteq 
\alpha \beta \circ _{2n} \alpha \gamma,  
&& \text{for $n$ even, and} 
\\
\labbel{4}
\alpha (\beta \circ_n \gamma  )
& \subseteq 
\alpha \beta \circ _{2n-1} \alpha \gamma , 
&& \text{for $n$ odd;} 
\end{align}     
 and the subscripts on the right-hand sides are  best possible;
actually, $\mathcal V$ does not even satisfy
\begin{align} \labbel{3}
\alpha (\beta \circ (\alpha \gamma \circ
 \alpha \beta \circ  {\stackrel{n-2}{\dots}} \circ \alpha \beta ) \circ  \gamma   ) 
&\subseteq
\alpha \beta \circ _{2n-1} \alpha \gamma, 
&& \text{for $n$ even, and} 
\\   
 \labbel{6}
\alpha (\beta \circ (\alpha \gamma \circ
 \alpha \beta \circ  {\stackrel{n-2}{\dots}} \circ \alpha \gamma) \circ  \beta  ) 
&\subseteq
\alpha \beta \circ _{2n-2} \alpha \gamma,
&& \text{for $n$ odd.}  
\end{align}   
 \end{theorem}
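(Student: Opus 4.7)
The plan is to split the statement into the positive inclusions \eqref{1}, \eqref{4} and the negative non-inclusions \eqref{3}, \eqref{6}, and first to make two reductions that let me work only with the two ``extreme'' varieties. Because $\mathcal{B}^d\subseteq\mathcal{B}$ and $\mathcal{N}_4^d\subseteq\mathcal{N}_4$ as varieties, positive congruence identities transfer to the distributive reducts. Moreover, via the interpretation $b(a,c,d)=u(a,a,c,d)$ recalled in the introduction, every $\mathcal{N}_4$-algebra $A$ is also a $\mathcal{B}$-algebra $A^{b}$ with $\mathrm{Con}(A)\subseteq\mathrm{Con}(A^{b})$; since intersection and relational composition do not depend on the operations of $A$, a congruence identity valid in $\mathcal{B}$ is automatically valid in $\mathcal{N}_4$. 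Dually, any triple $(\alpha,\beta,\gamma)$ witnessing failure of \eqref{3} or \eqref{6} in an algebra of $\mathcal{N}_4^d$ is still a triple of $\mathcal{B}^d$-congruences of the corresponding $b$-reduct witnessing the same failure. Hence it suffices to prove the positive part in $\mathcal{B}$ and the negative part in $\mathcal{N}_4^d$.

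For the positive part I fix $A\in\mathcal{B}$, realized as a subreduct of a lattice, and a chain $a_0\mathrel{\beta}a_1\mathrel{\gamma}a_2\mathrel{\beta}\cdots a_n$ with $a_0\mathrel{\alpha}a_n$. I plan to interpolate elements of the explicit form $c_i=b(a_0,a_i,a_n)=a_0(a_i+a_n)$ for the ``descending'' half and $d_i=b(a_n,a_0,a_i)=a_n(a_0+a_i)$ for the ``ascending'' half. Each $c_i$ and $d_i$ is $\alpha$-related to both $a_0$ and $a_n$ because $\alpha$ is compatible with $\cdot$ and with $+$, and the passage $c_i\to c_{i+1}$ differs only through the replacement $a_i\leftrightarrow a_{i+1}$, hence lies in $\beta$ or $\gamma$ according to the parity of $i$; the same applies to the $d_i$. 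The two half-chains meet at the common element $a_0a_n=c_n=d_0$, yielding a total chain of length $2n$ alternating between $\alpha\beta$ and $\alpha\gamma$ when $n$ is even. When $n$ is odd the alternation pattern breaks at the pivot because the last link of the $c$-half and the first link of the $d$-half are of the same type; a small redesign of the pivot (e.g.\ starting the $d$-half from $d_1$ and connecting $c_{n-1}$ to $d_1$ by a term involving both $a_0$ and $a_n$) restores the alternation and saves one link, giving $2n-1$.

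For the negative part I will produce, for each $n\geq 2$, a finite distributive lattice $L_n$---naturally a quotient of the distributive lattice freely generated by a zig-zag of $n+1$ generators $g_0,\dots,g_n$---together with three congruences $\alpha,\beta,\gamma$ and elements $a_0,\dots,a_n\in L_n$ realizing the strengthened left-hand side of \eqref{3} or \eqref{6}. The congruences will be built so that $\beta$ collapses the ``even'' adjacent pairs of generators, $\gamma$ the ``odd'' ones, and $\alpha$ identifies only the two endpoint orbits. Failure of any short chain will be detected by a distance valuation $\delta\colon L_n\to\mathbb{N}$ counting the minimum number of $\alpha\beta$- and $\alpha\gamma$-links needed to reach a given element from $a_0$; distributivity of $L_n$ makes $\delta$ well-behaved enough to force any candidate chain of the form $\alpha\beta\circ_k\alpha\gamma$ with $k<2n$ (resp.\ $k<2n-1$) to skip a generator that no single link can cross.

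The principal obstacle is the positive construction: laying down the interpolating elements in exactly the right order, handling the boundary effects where one argument of $b$ is $a_0$ or $a_n$, and carrying out the parity analysis that drops a link when $n$ is odd without breaking the alternation elsewhere. A secondary difficulty is designing $L_n$ flexible enough to satisfy the strengthened hypotheses of \eqref{3}/\eqref{6}---where the intermediate links are already $\alpha$-intersected---while keeping the distance valuation sharp enough to rule out chains of the respective critical length.
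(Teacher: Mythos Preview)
Your reductions and the positive half are essentially the paper's argument: the interpolants $c_i=b(a_0,a_i,a_n)$ and $d_i=b(a_n,a_0,a_i)$ are exactly the chain used there, and the pivot identity $c_n=a_0a_n=d_0$ is the same. For $n$ odd no ``redesign of the pivot'' is needed: the two adjacent links $c_{n-1}\mathrel{\alpha\beta}c_n=d_0\mathrel{\alpha\beta}d_1$ collapse to a single $\alpha\beta$-link because $\alpha\beta$, being an intersection of congruences, is transitive. That alone drops the count to $2n-1$.

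The negative half, as sketched, will not work. You propose to build a finite distributive lattice $L_n$ and congruences on it; but any distributive lattice carries the majority term $m(x,y,z)=(x+y)(y+z)(z+x)$, and a triple of \emph{lattice} congruences---which is what ``$\beta$ collapses the even adjacent pairs, $\gamma$ the odd ones, $\alpha$ identifies the endpoints'' naturally produces---therefore already satisfies $\alpha(\beta\circ_n\gamma)\subseteq\alpha\beta\circ_n\alpha\gamma$, far stronger than the inclusions you want to refute. No distance valuation on $L_n$ can overcome this. The paper's counterexample avoids the obstacle by working not in a lattice but in a subset $B\subseteq \mathbf C_{n+1}\times\mathbf C_{n+1}\times\mathbf C_2$ that is closed under $u$ yet \emph{not} closed under join (for instance $c_0+c_n\notin B$); only then is $(B,u)$ a genuine $\mathcal N_4^d$-algebra with no majority term, on which the restrictions of lattice congruences serve as $\alpha,\beta,\gamma$. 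The subsequent analysis is then concrete rather than via a valuation: the $\alpha$-class of $c_0$ in $B$ is shown to consist of exactly two chains $e_0,\dots,e_n$ and $f_0,\dots,f_n$ meeting only at $e_n=f_0$, and one checks by hand that each $\alpha\beta$- or $\alpha\gamma$-step moves at most one position along these chains, forcing the required length. Your plan is missing precisely this ``$u$-closed non-sublattice'' idea, and without it the construction cannot get off the ground.
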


\begin{proof}
The positive result that equations \eqref{1} and \eqref{4}  hold 
in $\mathcal B$ is 
an observation in \cite[Section 3]{jds},
however inserted there in a quite abstract and general context.
In the special case of $\mathcal B$ the proof is direct and
is an almost immediate
generalization of Baker's argument.
Indeed, if $n$ is even and 
$(a,d) \in \alpha (\beta \circ_n \gamma  )$, then
$a \mathrel \alpha  d$ and 
there are elements $c_i$ such that  
 $a= c_0 \mathrel { \beta  } c_1 \mathrel { \gamma } c_2 
\mathrel { \beta  } \dots
c _{n-1} \mathrel { \gamma } c_n =d  $.
Then the elements 
\begin{equation} \labbel{B}     
\begin{gathered}  
a= b(a,a,d)=b(a,c_0, d), \quad  
b(a,c_1, d), \quad  b(a,c_2, d), \ \ \dots \ \  
b(a,c_{n-1}, d),
\\
  b(a,c_{n}, d)= b(a,d, d)= a \cdot d = b(d,a,a)= b(d,c_0,a),
\\
b(d,c_1, a), \quad  b(d,c_2, a), \ \ \dots \ \ 
  b(d,c_{n-1}, a), \quad b(d,c_n, a)=b(d,d, a)=d
 \end{gathered}
  \end{equation}    
witness 
$(a,d) \in \alpha \beta \circ_{2n} \alpha \gamma  $.
Notice that, say, 
$b(a,c_j, d) \mathrel \alpha  b(a,c_j, a) = a = b(a,c_{j+1}, a)
\mathrel \alpha  b(a,c_{j+1}, d)$, for every $j$. 
The same chain of elements works in the case $n$ odd, 
but in this case 
$c _{n-1} \mathrel { \beta  } c_n =d  $,
hence 
$b(a,c_{n-1}, d) \mathrel { \alpha \beta } b(a,d, d)= a \cdot d = b(d,a,a)
\mathrel { \alpha \beta } b(d,c_1, a)$,
in particular, $b(a,c_{n-1}, d) \mathrel { \alpha \beta } b(d,c_1, a)$,
thus one passage might be skipped 
 and we get 
$(a,d) \in \alpha \beta \circ_{2n-1} \alpha \gamma  $.
Since $\mathcal B$
is interpretable in  $ \mathcal N_4$,
then \eqref{1} and \eqref{4}
hold in  $ \mathcal N_4$, too. 
The result for 
$ \mathcal N_4$ can be obtained also directly from 
the case $m=2$ of equations
\eqref{15} and \eqref{16} in
Proposition \ref{sch} below. 
Clearly, if some congruence identity 
holds in $\mathcal {B}$, respectively,  $ \mathcal N_4$,
then it holds in $\mathcal {B}^d$, respectively,   $ \mathcal N_4^d$.

Now we show that equations \eqref{3} and \eqref{6} fail,
hence the bounds in   \eqref{1} and \eqref{4} are  optimal.
We shall present the argument for 
$ \mathcal N_4$ and $ \mathcal N_4^d$. This is enough, since,
say,
$\mathcal B$
is interpretable in  $ \mathcal N_4$.
In any case, the same argument works
for $\mathcal B$ and $ \mathcal B^d$, too, with no essential modification. 

For $h \geq 1$, let $\mathbf C_{h+1}$ denote the $h+1$-elements chain 
with underlying set 
$C_{h+1} = \{0,1, \dots, h \} $
and with the standard ordering, inducing
the standard lattice operations of $\min$ and $\max$. 
Let $\mathbf L $
be the lattice
$  \mathbf C_{n+1} \times \mathbf C_{n+1} 
\times \mathbf C_2 $.
Since in what follows
the ``last''  $\mathbf C_2$ will play a different role with respect to the
first two $\mathbf C_{n+1}$'s, we shall usually denote the largest elements of 
$\mathbf C_2$ by $\uparrow$ and the smallest element of 
$\mathbf C_2$ by $\downarrow$. 
Consider the following elements of $L$:
\begin{align*}
 a=c_0& =(n,0,\uparrow),   & d=c_{n} = (0,n,\uparrow),
\qquad  \text{ and } 
\\
 c_i&= (n-i,i, \downarrow),   &     \text{for $i=1, \dots,n-1$.} \qquad
\end{align*} 

Recall that we let $u(x_1, x_2, x_3, x_4) = \prod _{j \neq k } (x_j + x_k) $
and 
 $b(x_1, x_3, x_4) = \allowbreak  u(x_1, x_1, x_3, x_4)= 
x_1(x_3+x_4)$.
 Let 
\begin{equation*}\labbel{BB}    
B= \{ a \in L \mid a \leq c_i, \text{ for some } i \leq n  \} .
  \end{equation*}
We show that $B$ is closed under $u$, hence  
$\mathbf B = (B, u)$ is an algebra in $\mathcal {N}_4$, actually, 
in $\mathcal {N}_4^d$, since $\mathbf L$ is a distributive lattice.
Indeed, suppose that  $a_{1} \leq c_{i_1}$, \dots ,
$a_{4} \leq c_{i_4}$. Since $u$ is invariant under any permutation of its
arguments, it is no loss of generality to assume that
$i_1 \leq i_2 \leq i_3 \leq i_4 $.   
If $u(a_1,a_2,a_3,a_4)$ has a third $\uparrow $
component, then at least three among 
$a_1,a_2,a_3,a_4$ have a   third $\uparrow $
component, hence at least two among 
$a_1,a_2,a_3,a_4$ are either $ \leq c_0$ or  $ \leq c_n$. 
Say, $a_1,a_2 \leq  c_0$, hence, 
since $u$ is a monotone operation,
$u(a_1,a_2,a_3,a_4) \leq 
u(c_0,c_0,c_{i_3},c_{i_4}) =
c_0(c_{i_3}+c_{i_4}) \leq c_0$.
Otherwise
$u(a_1,a_2,a_3,a_4)$ 
has a   third $\downarrow $
component and
$u(a_1,a_2,a_3,a_4) \leq u(c_{i_1},c_{i_2},c_{i_3},c_{i_4})
=(n-i_3, i_2, \downarrow) =c_{i_2}c_{i_3} \leq c_{i_2}$. In any case,
$u(a_1,a_2,a_3,a_4) \in B$. 

Hence 
$\mathbf B = (B, u) $ is an algebra in $  \mathcal {N}_4$;
in particular,  $(B, b) $ is an algebra in $  \mathcal {B}$.
Let $B ^{ \uparrow } $ denote the set 
of all the elements of $B$
with 
a last $ \uparrow $.
By the definition of $B$,
the elements of 
$B^{ \uparrow } $ 
are exactly the following:
\begin{equation}\labbel{bu}\tag{$B ^{ \uparrow } $} 
e _{i} =  a(d+ c _{i}) = (n-i,0, \uparrow),  \quad  
f _{i} =  d(a+ c _{i}) = (0,i, \uparrow), \quad i=0, \dots, n .
 \end{equation}    
 
Now we can show that \eqref{3} and \eqref{6}  fail, in general.
Let $\alpha$ be the 
kernel of the third projection, thus
$\alpha$ is a congruence 
on $\mathbf B$.

Let $\beta$ be the congruence 
on $\mathbf B$ 
defined in such a way that two elements $(i_1,i_2,i_3) $
and
$(j_1,j_2,j_3) $ of $B$ 
are $\beta$-related
if and only if,
for every $\ell =1,2$, 
 their components $i_ \ell $ and $j_ \ell $  
differ at most by $1$,
and:

(a$_{ \beta }$) if $i_1 \neq j_1$, then
$ \sup \{ i_1 , j_1 \} $ has the same parity of $n$, and 

(b$_{ \beta }$) if $i_2 \neq j_2$, then
$ \sup \{ i_2, j_2 \} $ is odd.

It can be checked directly that $\beta$ is a congruence; otherwise, argue as
follows.
Let $\beta_1'$ be 
the congruence 
on $\mathbf C_{n+1}$ whose blocks are
$ \{ n, n-1\}, \{ n-2, n-3 \}, \dots $\ 
If $\beta''_1$ is the counterimage in 
 $\mathbf L$ of $\beta_1'$  through
the first projection, then 
$\beta''_1$ is a congruence on $\mathbf L$,
hence a congruence on $(L,u)$. 
Thus 
 the restriction $\beta_1$ of
$\beta''_1$ to $B$ is a congruence on 
$(B,u)$. 
Similarly, define $\beta_2$
using the counterimage through the second projection
of the congruence  
on $\mathbf C_{n+1}$ whose blocks are
$ \{ 0,1\},  \{2, 3\},  \dots $\    
 Then $\beta= \beta_1 \beta_2 $, hence $\beta$ 
is a congruence, being the meet of  two congruences.  

The congruence $\gamma$  
on $\mathbf B$ 
is defined in a similar way:  two elements $(i_1,i_2,i_3) $
and
$(j_1,j_2,j_3) $ of $B$ 
are $\gamma$-related
if and only if,
for every $\ell =1,2$, 
 their components $i_ \ell $ and $j_ \ell $  
differ at most by $1$,
and:

(a$_{ \gamma  }$) if $i_1 \neq j_1$, then
$ \sup \{ i_1 , j_1 \} $ has not the same parity of $n$, and 

(b$_{ \gamma  }$) if $i_2 \neq j_2$, then
$ \sup \{ i_2, j_2 \} $ is even.

In passing, let us mention that there is an alternative construction of 
$\mathbf B$ which makes  the definitions of $\beta$ and $\gamma$ 
 simpler; actually, in this alternative
construction $\beta$ and $\gamma$ turn out
to be  kernels of appropriate projections.
See    \cite{baker}.
However, all the remaining arguments are much more involved
in  \cite{baker}; moreover, the current presentation 
has the advantage of being  more compact. 

With the above definitions
of $\alpha$, $\beta$ and $\gamma$, we have $c_0 \mathrel \alpha  c_{n}$
and 
 $c_j \mathrel \alpha  c_{j+1}$, for
$j=1, \dots, n-2 $. Moreover,
 $c_{2i} = (n-2i,2i, - ) \mathrel \beta (n-2i-1,2i+1, - ) =  c_{2i+1}$, 
for all the appropriate values of $i$ and where the value of the third
component is not relevant. Similarly,  
$c_{2i+1} \mathrel \gamma    c_{2i+2}$,
for every  appropriate $i$. 
Hence 
$(c_0, c _{n} ) \in 
\alpha (\beta \circ (\alpha \gamma \circ _{n-2} 
 \alpha \beta) \circ  \gamma ) $, for 
$n$ even, and
$(c_0, c _{n} ) \in  \alpha (\beta \circ (\alpha \gamma \circ
 \alpha \beta \circ  {\stackrel{n-2}{\dots}} \circ \alpha \gamma) \circ  \beta  )$, 
 for $n$ odd. 

On the other hand, 
in view of the above description of
$B ^{ \uparrow } $, 
the only elements
$\alpha \beta $-connected to 
$c_0=e_0 = (n, 0, \uparrow )$ 
are $c_0$ itself and 
 $e_1= (n-1, 0, \uparrow )$.
No other element 
of $B ^{ \uparrow } $
is $\alpha \gamma  $-connected to 
$c_0$, hence there is no advantage in
``staying at $c_0$''. 
The only other element
$\alpha \gamma  $-connected to 
$e_1=(n-1, 0, \uparrow )$ is $e_2=(n-2, 0, \uparrow )$ and, so on, 
the only element 
$\alpha \beta $-connected to 
$e_{2i}$ is $e_{2i+1}$
and the only element 
$\alpha \gamma  $-connected to 
$e_{2i+1}$ is $e_{2i+2}$,
until we reach $e_{n-1}$,
where the situation splits into two
cases.

If $n$ is even, then
$  (1, 0, \uparrow ) = e_{n-1} \mathrel { \alpha \gamma } e_n
= (0, 0, \uparrow ) = f_0 \mathrel { \alpha \beta } f_1 =  (0, 1, \uparrow )$
 and no other nontrivial  relation
holds among these elements.
Symmetrical considerations hold for the
$f_j$'s and, since 
$f _{n}= c _{n} $, 
 we get that 
any chain
from $c_0$ to $c _{n} $  
 in which each pair of elements
is either $\alpha \beta $ or $ \alpha \gamma $-connected
  must involve all the $2n+1$ elements 
of  $B ^{ \uparrow } $,
hence any chain as above is of length
at least $2n$, thus 
\eqref{3} fails in $\mathbf B$.

On the other hand, if $n$ is odd, then
$  (1, 0, \uparrow ) = e_{n-1} \mathrel { \alpha \beta  } e_n
= (0, 0, \uparrow ) = f_0 \mathrel { \alpha \beta } f_1 =  (0, 1, \uparrow )$,
thus $e_{n-1} \mathrel { \alpha \beta  }  f_1 $ 
and we do not need all the
elements of $B ^{ \uparrow } $
to get an $\alpha \beta $-or-$\alpha \gamma $-chain, 
we can skip $e _{n} =  f_0$.
 However, all the rest is the same and we need
$2n-1$ steps from  
$c_0 $ to $ c _{n} $, hence
\eqref{6} fails.
\end{proof}
 
The case $n=2$ 
for $\mathcal B$ 
in Theorem \ref{bds} 
gives another proof of Baker result that 
$\mathcal B$ is $4$-distributive but not 
$3$-distributive. The proof of    
 $4$-distributivity is the same. The counterexample
to $3$-distributivity 
in \cite{B} has 10 elements and
the counterexample 
here can be taken to have
$9$ elements,
since two elements can be discarded from $B$, still 
having an algebra in $\mathcal {B}$, as we shall show
in the proof of  Proposition \ref{propcon} below. 
In the special case  $n=2$ the treatment
from \cite{baker} would be slightly simpler;
 the classes of congruences in 
the example from \cite{B} are to be computed by hand,
while in \cite{baker}  we have considered kernels of projections, which
are automatically congruences. 

\begin{remarks} \labbel{rem}    
(a) There is a short and simple syntactical folklore proof
that Baker's variety is not $2$-distributive, that is, that 
$\mathcal B$  has no
majority term. Actually, the proof shows that $\mathcal B$  
has no near-unanimity term.
If $t$ is a term of $\mathcal B$, define the \emph{relevant}
variable of $t$ inductively as follows. If $t$ is a variable
$x_j$, then $x_j$ is the relevant variable of $t$. Otherwise,
$t=b(t_1, t_2,t_3)$ and we define the relevant variable of $t$
to be the relevant variable of $t_1$. If $\mathbf B \in \mathcal B$,
$\mathbf B$ has a minimal element $0$ and we substitute $0$ for the
relevant variable of some term $t$, then $t$ is evaluated as $0$,
 no matter what we substitute for the other
variables. Thus $\mathcal B$ has no 
near-unanimity term, in particular, no majority term.  

More generally, the argument shows 
that, for every $k$-ary term $t$, there is some 
``place'' $i \leq k$  
such that
$\mathcal B$ satisfies no equation
of the form $t(\dots, y, \dots) =x$,
where $y$ is put in place $i$, $x$
is a variable distinct from $y$ and the other
arguments of $t$ are arbitrary variables.  
This shows that $\mathcal B$
has no \emph{cube term}, as introduced 
by Berman, Idziak, Markovi\'c, 
  McKenzie,  Valeriote and Willard \cite{bimm}.

(b)
Using a different method,
Mitschke \cite{M}  proved that the variety
$\mathcal I$  of implication algebras has
no near-unanimity term. 
Since Baker's variety $\mathcal B$ is interpretable in 
$\mathcal I$, then Mitschke's result furnishes another proof that 
$\mathcal B$ has no near-unanimity term.
The  method in (a) can be applied also to $\mathcal I$,
providing a shorter proof of the mentioned result by  Mitschke.
Simply argue as in (a) above, defining the relevant variable
of $t_1 {\rightarrow} t_2$ to be the relevant variable of 
$t_2$ and dealing with some maximal element $1$ rather than with $0$.   
Thus we also get that $\mathcal I$ has no cube term.
Essentially, this is the argument  hinted on \cite[p.\ 1470]{bimm}. 
In particular, $3$-distributive $3$-permutable varieties 
 do not necessarily have a cube term.

(c) The argument in (a) can be extended in order to give still another proof
that Baker's variety is not $3$-distributive.
Indeed, $3$-distributivity is equivalent to the existence of ternary terms 
$j_1$ and $j_2$ satisfying
$x=j_1 (x,x,y) = j_1 (x,y,x)$, 
$j_1 (x,y,y) = j_2 (x,y,y)$ and
$j_2 (x,x,y)= j_2 (y,x,y)=y$ \cite{J}.
With the same assumptions and definitions as in (a) above, 
the first equations imply that the relevant variable of 
$ j_1 (x,y,z)$ is $x$, hence
$0 = j_1 (0,b,b)= j_2 (0,b,b)$,
for every $b \in B$.
Under the order induced by the 
semilattice operation, we have that every term operation is monotone
(this applies to $\mathcal B$ but \emph{not} to the
variety of implication algebras!),
hence 
$0 = j_2 (0,b,b) \geq j_2 (0,0,b) = b$,
which is impossible if $\mathbf B$
is taken to be of cardinality $ \geq 2$.
 Thus $\mathcal B$  is not $3$-distributive.

In fact, in the above argument we have not  used the equation
$j_2 (y,x,y)=y$.
This shows that $\mathcal B$ 
does not even satisfy
$\alpha( \gamma  \circ \beta  ) \subseteq 
\alpha \gamma  \circ  \beta  \circ \gamma   $,
equivalently, taking converses,
$\mathcal B$ 
does not satisfy $ \alpha ( \beta \circ \gamma )
\subseteq  \gamma \circ \beta  \circ \alpha \gamma $.
This negative result shall be improved in 
the following Proposition.
Compare equation \eqref{7a} below.
\end{remarks}

 In the terminology from \cite{jds}, Theorem \ref{bds} implies  that  
$ J _{ \mathcal B} (n -1 ) = 2n  -1 $, for $n$ even and that
$ J _{ \mathcal B} ( n -1 ) = 2n -2 $, for $n$ odd.
In \cite{jds} we have also considered ``reversed''
J{\'o}nsson spectra, given by identities like    
$\alpha (\beta \circ_n \gamma  )
\subseteq 
\alpha \gamma  \circ _{k} \alpha \beta  $.
We are 
going to see that the proof of Theorem \ref{bds}  gives
 exact bounds 
  for identities
of the above kind both in $\mathcal B$ and $\mathcal N_4$,
as well as in their distributive counterparts.

Moreover, it follows from results by Tschantz \cite{T}
that, for every  congruence modular variety $\mathcal V$ and every $n$, 
there is some $k$ such that $\mathcal V$  
satisfies $\alpha( \beta \circ _n  \gamma ) \subseteq \alpha ( \gamma \circ \beta  )
\circ ( \alpha \gamma  \circ_{k}  \alpha \beta ) $.
See, e.~g., \cite[Section 4]{jds} for details. 
Of course, in a congruence distributive variety we already 
know that $\alpha( \beta \circ _n  \gamma ) \subseteq 
\alpha \beta \circ_{k'}  \alpha \gamma  $, for some $k'$.
However, in principle, it might happen that Tschantz-like formulae 
provide a value of $k$
much smaller than $k'$. 
This is not the case for $\mathcal B$ and $\mathcal N_4$.

\begin{proposition} \labbel{propcon} 
Suppose that  $n \geq 2$
and  $\mathcal V $ is either 
$ \mathcal B $, $ \mathcal B ^d $, $  \mathcal N_4$
or  $\mathcal N_4^d$. 
Then $\mathcal V$ satisfies the following identities.
\begin{align}
\labbel {7} 
\alpha (\beta \circ_n \gamma  )
&\subseteq 
\alpha \gamma  \circ _{2n+1} \alpha \beta     && \text{for $n$ even,}   
\\
\labbel {8}
\alpha (\beta \circ_n \gamma  )
&\subseteq 
\alpha \gamma  \circ _{2n} \alpha \beta     && \text{for $n$ odd,}   
\\ 
\labbel {7a} 
\alpha (\beta \circ_n \gamma  )
&\subseteq 
\alpha (\gamma \circ \beta ) \circ
 ( \alpha \gamma  \circ _{2n-1} \alpha \beta )    && \text{for $n$ even,}   
\\
\labbel {8a} 
\alpha (\beta \circ_n \gamma  )
&\subseteq 
\alpha (\gamma \circ \beta ) \circ
 ( \alpha \gamma  \circ _{2n-2} \alpha \beta )    && \text{for $n$ odd,}   
\end{align}    
and the values of the indices  on  the right-hand sides
give the best possible bounds. 
Actually, $\mathcal V$  fails to satisfy
\begin{align*}
\alpha (\beta \circ (\alpha \gamma \circ _{n-2} 
 \alpha \beta) \circ  \gamma ) 
&\subseteq 
\alpha(\gamma \circ \beta ) \circ
 ( \alpha \gamma  \circ _{2n-4} \alpha \beta ) \circ  
\alpha(\gamma \circ \beta ), \text{ for $n$ even,}  
\\
\alpha (\beta \circ (\alpha \gamma \circ_{n-2} \alpha \beta )
\circ  \beta  ) 
&\subseteq 
\alpha (\gamma \circ \beta ) \circ
(\alpha \gamma \circ _{2n-5}
 \alpha \beta )
  \circ
\alpha ( \beta \circ \gamma ),
\text{ for $n$ odd.}
\end{align*}
\end{proposition}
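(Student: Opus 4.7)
The plan for the positive direction is to derive equations \eqref{7}--\eqref{8a} directly from Theorem \ref{bds}. For \eqref{7} and \eqref{8}, given a chain witnessing the conclusion of \eqref{1} or \eqref{4}, I would prepend a trivial $\alpha \gamma$-step $a=a$, increasing the factor count by one and flipping the alternation so that $\alpha \gamma$ now occupies first position; this produces $\alpha \gamma \circ _{2n+1} \alpha \beta$ or $\alpha \gamma \circ _{2n} \alpha \beta$, respectively. For \eqref{7a} and \eqref{8a} the factor count from Theorem \ref{bds} is already the correct one, so I would simply relabel the first $\alpha \beta$-factor of the chain as an $\alpha(\gamma \circ \beta)$-factor, using the trivial inclusion $\alpha \beta \subseteq \alpha(\gamma \circ \beta)$ that follows from reflexivity of $\gamma$.

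For the negative direction I plan to reuse the algebra $\mathbf B$ together with the congruences $\alpha,\beta,\gamma$ constructed in the proof of Theorem \ref{bds}. First I would verify that the LHS of each failure statement contains the pair $(c_0,c_n)$: the same chain $c_0,c_1,\dots,c_n$ used for Theorem \ref{bds} suffices, because the intermediate elements $c_1,\dots,c_{n-1}$ share the third coordinate $\downarrow$ and are therefore pairwise $\alpha$-related, so the inner steps witness $\alpha \gamma \circ _{n-2} \alpha \beta$, while the two outer steps, leaving and entering the $\alpha$-block $B^{\uparrow}$, are only $\beta$ or $\gamma$ (without the $\alpha$-factor), matching the shape of the LHS.

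The core of the argument is to show $(c_0,c_n)\notin$ RHS, and the key technical claim is that inside $B^{\uparrow}$ the $\alpha(\gamma \circ \beta)$-image of $e_0=c_0$ equals $\{e_0,e_1\}$, and by a symmetric computation the $\alpha(\gamma \circ \beta)$-preimage (for $n$ even) or $\alpha(\beta \circ \gamma)$-preimage (for $n$ odd) of $f_n=c_n$ equals $\{f_{n-1},f_n\}$. I would prove this by direct case analysis on the possible intermediate element $z\in B$: starting from $e_0=(n,0,\uparrow)$, the constraints (a$_\gamma$) and (b$_\gamma$) force $z_1=n$ and $z_2=0$, after which the $\beta$-constraints restrict any $y\in B^{\uparrow}$ with $z\mathrel\beta y$ to the set $\{e_0,e_1\}$. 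Granted this claim, together with the easy observation that every $\alpha \beta$- or $\alpha \gamma$-step inside $B^{\uparrow}$ advances at most one position along the linear chain $e_0,e_1,\dots,e_n=f_0,f_1,\dots,f_n$, with the single exception of the jump $e_{n-1}\mathrel{\alpha \beta}f_1$ that exists only when $n$ is odd, a counting argument finishes the proof: for $n$ even the failure RHS has $2n-2$ factors, each advancing at most one position, so the total advance is at most $2n-2$; for $n$ odd the failure RHS has $2n-3$ factors, of which at most one can be the advance-by-two jump, again yielding total advance at most $2n-2$. In both cases this falls strictly short of the $2n$ positions separating $e_0$ from $f_n$ in $B^{\uparrow}$, so $(c_0,c_n)\notin$ RHS. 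The main obstacle will be carrying out the case analysis for the technical claim cleanly, since $z$ ranges over all of $B$ and not merely $B^{\uparrow}$; in particular one must also inspect the $B^{\downarrow}$-element $(n,0,\downarrow)$ and work out its $\beta$-neighbourhood inside $B$.
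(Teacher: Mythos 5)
Your proof is correct, and the positive direction (deriving \eqref{7}--\eqref{8a} from \eqref{1}, \eqref{4} by prepending a trivial step and using $\alpha\beta\subseteq\alpha(\gamma\circ\beta)$) is exactly the paper's argument.

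For the negative direction you take a genuinely different route. The paper first passes to the subalgebra $\mathbf B^-$ on $B^-=B\setminus\{(n,0,\downarrow),(0,n,\downarrow)\}$ (checking closure under $u$), and then observes that in $B^-$ the element $c_0$ has no $\gamma$-neighbour other than itself, and symmetrically $c_n$ has no $\beta$- (resp.\ $\gamma$-) neighbour. Thus each boundary factor $\alpha(\gamma\circ\beta)$ collapses outright to $\alpha\beta$ (resp.\ $\alpha\gamma$), and the failure is read off directly from the length count already established for \eqref{3} and \eqref{6} in Theorem \ref{bds}. You instead stay in $\mathbf B$ itself and do the case analysis on the intermediate element $z$, which may lie in $B^\downarrow$ (your $z\in\{c_0,(n,0,\downarrow)\}$), and then compute that the $\alpha(\gamma\circ\beta)$-image of $c_0$ in $B^\uparrow$ is still $\{e_0,e_1\}$, and symmetrically for $c_n$. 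Both arguments reach the same bottleneck and are valid: the paper's deletion of the two offending $B^\downarrow$-elements eliminates the extra case at the cost of a subalgebra verification, while your in-$B$ computation avoids that step but must inspect the $\beta$-neighbourhood of $(n,0,\downarrow)$ by hand, as you note. One small point worth tightening in your counting for $n$ odd: the phrase ``at most one can be the advance-by-two jump'' is true but informally stated; the cleanest formulation is that the chord $e_{n-1}\!\leftrightarrow\! f_1$ reduces the graph distance from $e_0/e_1$ to $f_{n-1}/f_n$ only to $2n-3$, which still exceeds the $2n-5$ available middle steps, so $(c_0,c_n)$ cannot be in the right-hand side.
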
 

\begin{proof}
Equations \eqref{7} - \eqref{8a} are immediate from \eqref{1} and \eqref{4}, since, say,
$\alpha \beta \circ _{2n} \alpha \gamma   \subseteq
\alpha \gamma  \circ _{2n+1} \alpha \beta $
and $ \alpha \beta \subseteq \alpha ( \gamma \circ \beta )$.

The proof of Theorem \ref{bds} shows
that the bounds on the right-hand sides
of \eqref{7} and  \eqref{8}
 are optimal.
Indeed, in the proof that \eqref{3} and  \eqref{6} fail
we have observed that $c_0$ is $\alpha \gamma $-connected to 
no other element of $B ^{ \uparrow } $, hence we ``lose one turn''
if we want the chain to start with  $\alpha \gamma $.
Actually, we have that, say, for $n$ even, already
$\alpha (\beta \circ (\alpha \gamma \circ _{n-2} 
 \alpha \beta) \circ  \gamma) \subseteq 
\alpha \gamma  \circ _{2n} \alpha \beta$
fails in $\mathcal V$. 

In order to show that the indices in \eqref{7a} and   \eqref{8a}
are best possible,
we shall modify the construction in 
the proof of Theorem \ref{bds}.
With the definitions and 
notations in the mentioned proof,
let $B^- = B \setminus \{ (n,0, \downarrow), (0,n,\downarrow) \} $.
We claim that $B^-$ is (the base set  for)
an algebra in, say,  $\mathcal {N}_4$.   
We shall show that if 
$a_1,a_2,a_3,a_4 \in B^-$, then 
it is not the case  that 
$u(a_1,a_2,a_3,a_4) =(n,0, \downarrow)$.
Indeed, since $c_0$ is the only element of 
$B^-$ with first component $n$,  if the first 
component of  $u(a_1,a_2,a_3,a_4)$ is $n$, then 
at least three arguments of $u$ have $n$ as the first component,
hence at least three arguments of $u$ are equal to 
$c_0$, thus  $u(a_1,a_2,a_3,a_4)$ is itself  $c_0$. 
 Notice that, by construction, $n$ is the maximum possible value
for the first component.  
Similarly, if 
$a_1,a_2,a_3,a_4 \in B^-$, then 
$u(a_1,a_2,a_3,a_4) $ is not $ (0,n, \downarrow)$,
hence  $\mathbf B^-$ is an algebra in $\mathcal {N}_4$. 

We have that $c_0$ is $ \gamma $-connected to 
no other element of  $B^-$,
because of clause 
(a$_ \gamma $) in the definition of $\gamma$.
Thus if  
$c_0 \mathrel { \alpha ( \gamma \circ \beta )} f $ in $B^-$,
for some $f$, then
$c_0 \mathrel {  \gamma } e \mathrel { \beta  }f $,
for some $e$, hence necessarily $c_0=e$ and 
$c_0 \mathrel { \alpha \beta } f $.
Thus if, say,  $n$ is even and
we suppose by contradiction that
$(c_0,c_{n}) \in \alpha (\gamma \circ \beta ) \circ
 ( \alpha \gamma  \circ _{2n-2} \alpha \beta ) $,    
then we would have 
$(c_0,c_{n}) \in
  \alpha \beta   \circ _{2n-1} \alpha \gamma   $,
but this is impossible
because of the counterexample constructed in the proof of Theorem \ref{bds}.  
Hence  we cannot  get better bounds in  \eqref{7a} or  \eqref{8a}.
Performing also the symmetric argument, we have that $\mathcal V$
fails to satisfy the last equations in the statement. 
\end{proof}
 
Recall that a variety $\mathcal V$ is \emph{$n$-modular} 
if $\mathcal V$ satisfies the identity
$\alpha (\beta \circ \alpha  \gamma \circ \beta  )
\subseteq 
\alpha \beta   \circ _{n} \alpha \gamma $.  
 Cf. Day \cite{D}.  Equations \eqref{4} and   \eqref{6} 
in Theorem \ref{bds}  in the case 
$n=3$  provide the following corollary.  

\begin{corollary} \labbel{dm}
The varieties  
$ \mathcal B $, $ \mathcal B ^d $, $  \mathcal N_4$
and  $\mathcal N_4^d$
  are  $4$-distributive,
 $5$-modular and  not 
$4$-modular.  
 \end{corollary}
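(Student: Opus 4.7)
The plan is to derive all three assertions of the corollary directly from Theorem \ref{bds} by specializing $n$ appropriately and using the trivial inclusion $\alpha\gamma\subseteq\gamma$.

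First I will dispose of $4$-distributivity. By definition $\mathcal V$ is $4$-distributive if it satisfies $\alpha(\beta\circ\gamma)\subseteq\alpha\beta\circ_4\alpha\gamma$, which is exactly equation \eqref{1} of Theorem \ref{bds} specialized to the even value $n=2$ (giving $2n=4$ on the right-hand side). So this is immediate.

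Next I handle $5$-modularity. Since $\alpha\gamma\subseteq\gamma$, one has the inclusion $\beta\circ\alpha\gamma\circ\beta\subseteq\beta\circ\gamma\circ\beta=\beta\circ_3\gamma$, and therefore
\[
\alpha(\beta\circ\alpha\gamma\circ\beta)\subseteq\alpha(\beta\circ_3\gamma)\subseteq\alpha\beta\circ_5\alpha\gamma,
\]
where the last inclusion is equation \eqref{4} of Theorem \ref{bds} taken at the odd value $n=3$ (giving $2n-1=5$). This is exactly the defining identity of $5$-modularity, so $\mathcal V$ is $5$-modular.

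Finally, failure of $4$-modularity amounts to failure of $\alpha(\beta\circ\alpha\gamma\circ\beta)\subseteq\alpha\beta\circ_4\alpha\gamma$. I observe that this is precisely the $n=3$ instance of equation \eqref{6}: when $n=3$ is odd, the middle block $\alpha\gamma\circ\alpha\beta\circ{\stackrel{n-2}{\dots}}\circ\alpha\gamma$ consists of a single factor $\alpha\gamma$ (since $n-2=1$), the outer relations are $\beta\circ\cdots\circ\beta$, and the right-hand side has subscript $2n-2=4$. Theorem \ref{bds} asserts that this inclusion fails in $\mathcal V$, which is exactly the failure of $4$-modularity.

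The only thing to be careful about is the translation of the $n=3$ instance of \eqref{6} into the $4$-modularity identity, in particular checking that the middle chain degenerates to a single $\alpha\gamma$; there are no nontrivial computations to perform beyond bookkeeping of indices, so this step is the closest the proof comes to an obstacle.
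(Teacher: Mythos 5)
Your proof is correct and follows essentially the same route as the paper: specializing the identities of Theorem \ref{bds} at small $n$. Your derivation of $4$-distributivity from equation \eqref{1} at $n=2$, of $5$-modularity from \eqref{4} at $n=3$ using $\alpha\gamma\subseteq\gamma$ (the paper obtains the same conclusion by substituting $\alpha\gamma$ for $\gamma$, which is equivalent), and the identification of the $n=3$ instance of \eqref{6} with the negation of the $4$-modularity identity, all match the intended argument.
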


\section{Some relation identities} \labbel{relid} 

We shall  use  $R$, $S$  and $T$ as variables for 
reflexive and admissible binary relations
and $\Theta$ as a variable for tolerances.
All the relations considered in the present note 
are assumed to be reflexive, hence we shall sometimes simply
say \emph{admissible} in place of \emph{reflexive and admissible}. 
In \cite[Proposition 3.1]{jds} we  noticed that
congruence distributive varieties satisfy 
also
 relation identities
of the form
$ \Theta  (R \circ_n S  )
\subseteq 
\Theta R \circ _{k} \Theta S $.
This is  a consequence of 
results by Kazda,  Kozik,   McKenzie and Moore \cite{adjt}. 
See 
 \cite{jds} for further details and references.

We do not know whether every congruence distributive variety 
satisfies $ T (R \circ S  )
\subseteq 
T R \circ _{k} T S $, for some $k$.
However, 
we showed in \cite{uar} that the above relation
holds in Baker's variety with $k=4$.
In the present section we shall provide 
 exact bounds 
  for identities
of the above kind,
both in the case of $\mathcal B$ 
and in the case of $\mathcal N_4$.  
We shall exhibit a subtle difference between 
the two varieties,
when relation identities are concerned.
Compare Theorems \ref{pari}, \ref{proprelb} and
  Proposition \ref{moregen}
 below.
On the other hand, as we showed
in the previous section, $\mathcal B$ 
and $\mathcal N_4$ 
behave in the same way as far as
 congruence identities are concerned.

Each result in the present and in the following section
holds for $\mathcal {B}$ if and only if it holds for 
$\mathcal {B}^d$. In fact, we shall never use lattice distributivity 
in the proofs;
on the other hand, all the counterexamples we shall deal with
are based on the construction in the proof of Theorem \ref{bds}
and this construction is
the reduct of a distributive lattice. 
Similarly, each result holds for $\mathcal {N}_4$ if and only if it holds for 
$\mathcal {N}_4^d$.
For the sake of simplicity, we shall not mention
$\mathcal {B}^d$ and $\mathcal {N}_4^d$ explicitly in the following statements.
However,  the reader might always consider 
$\mathcal {B}^d$ in place of  $\mathcal {B}$ and $\mathcal {N}_4^d$ 
in place of  $\mathcal {N}_4$ in what follows.

Recall that, if $n$ is odd, we sometimes  write 
$ R \circ S \circ {\stackrel{n}{\dots}} \circ R$
in place of $R \circ _n S$,
when we want  to make clear 
that the last factor is $R$.

\begin{theorem} \labbel{pari}
If $n \geq 2$, then the following identities are satisfied:
\begin{align}
\labbel {9}
T (R \circ_n S)& \subseteq  T R  \circ _{2n} T S,
 \text{\ \ \ \  by $\mathcal B$, $\mathcal N_4$, $n$ even;}  
\\
\labbel {10}
 T (R \circ_n S)& \subseteq  
 (T R  \circ T S  \circ  {{\stackrel{n}{...}}}  \circ  TR ) \circ
 (T R  \circ T S \circ {{\stackrel{n}{...}}} \circ  TR ), 
 \text{\  by $\mathcal B$, $n$ odd;}  
\\
\labbel {10bn}
 T (R \circ_n S) & \subseteq  
T R  \circ _{2n-1}  T S ,
\text{\ \ by $ \mathcal N_4$, $n$ odd,}    
 \end{align}    
and the bounds  on  the right
are best possible; moreover, $\mathcal B$
fails to satisfy 
\begin{equation}\labbel{10b}     
\alpha  ( \Theta  \circ_n \gamma ) \subseteq  
\alpha  \Theta   \circ _{2n} \alpha  \gamma   , 
  \end{equation}
for $n$ odd, where $\Theta$ is a tolerance and $\alpha$ and $\gamma$ 
are congruences.
 \end{theorem}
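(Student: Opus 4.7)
My plan is to adapt the chain \eqref{B} from the proof of Theorem \ref{bds} to the relational setting and to inherit optimality of the positive bounds from the same counterexample algebra $\mathbf{B}$. The principal technical issue is that a reflexive admissible $T$ is not transitive, so I cannot exploit the common value $x=b(x,c_j,x)=b(x,c_{j+1},x)$ as an intermediary between $b(x,c_j,y)$ and $b(x,c_{j+1},y)$, as was done for the $\alpha$-step in Theorem \ref{bds}. Each adjacent pair of the chain must instead be shown to lie in $TR$ or $TS$ by a \emph{single} application of admissibility.

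For the $R$- or $S$-witness nothing changes: it comes from applying $b$ (equivalently $u$, in $\mathcal{N}_4$) to $(c_j,c_{j+1})\in R$ or $S$ together with two diagonal pairs. For the $T$-witness the pair $(a,d)\in T$ must be used in the outer slots of $b$. The key combinatorial fact is that admissibility together with the semilattice term $xy=b(x,y,y)$ generates a rich supply of derived $T$-pairs; in particular $(a,ad)$ and $(ad,d)$ lie in $T$, and more generally $(e,ed)\in T$ for every element $e$. This unlocks rewritings such as
\[
 (b(a,c_j,d),ad)=(b(a,c_j,d),b(ad,c_j,d)),
\]
witnessing $T$-relatedness via the pointwise pairs $(a,ad),(c_j,c_j),(d,d)\in T$; analogous manoeuvres handle each remaining pair of \eqref{B}. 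The case $n$ even in both $\mathcal{B}$ and $\mathcal{N}_4$ yields \eqref{9}; for $n$ odd in $\mathcal{B}$ the pivot $ad$ is flanked by two $TR$-steps, producing the doubled pattern of \eqref{10}; in $\mathcal{N}_4$ the $4$-ary near-unanimity term $u$ affords additional flexibility, collapsing the two middle steps into one and saving one factor to give the sharper bound \eqref{10bn}. Optimality of all three positive bounds is then immediate from Theorem \ref{bds}, the congruences $\alpha,\beta,\gamma$ constructed there being in particular reflexive admissible, so that the analysis of $B^{\uparrow}$-connectivity already performed forbids shorter $TR$-$TS$ chains.

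The main obstacle is the failure \eqref{10b} in $\mathcal{B}$ for $n$ odd. Since $\mathbf{B}$ is a lattice reduct, it lies in $\mathcal{N}_4$ and hence satisfies \eqref{10bn}, which implies \eqref{10b}; the same algebra cannot witness the failure. My plan is to view $\mathbf{B}$ purely as a $b$-algebra, discarding the operation $u$, and to enlarge $\beta$ to a \emph{proper} tolerance $\Theta$ that is $b$-admissible but not $u$-admissible. The failure of \eqref{10b} then reduces to verifying, by a case analysis on the elements of $B^{\uparrow}$ augmented with the new $\Theta$-edges, that no alternating $\alpha\Theta$--$\alpha\gamma$ chain of $2n$ factors connects $c_0$ to $c_n$. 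Choosing $\Theta$ with just the right new pairs, and checking the resulting combinatorics on $B^{\uparrow}$, is the delicate step.
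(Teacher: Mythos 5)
Your plan for the positive inclusions has a genuine gap: you cannot show that consecutive elements of the chain \eqref{B} from Theorem~\ref{bds} are $T$-related by a single application of admissibility. The pair $(b(a,c_j,d),b(a,c_{j+1},d))$ differs in the $c_j$ versus $c_{j+1}$ slot, and the only nontrivial $T$-pair available is $(a,d)$; no $b$-term applied to $(a,d)$, $(d,a)$ and diagonal pairs can produce this difference, since swapping $a\leftrightarrow d$ cannot substitute $c_{j+1}$ for $c_j$. Your sample manoeuvre verifies the different and unneeded pair $(b(a,c_j,d),\,ad)$, and your broader claim that $(e,ed)\in T$ for \emph{every} $e$ is false: to get $(e,ed)=(b(e,e,e),b(e,d,d))\in T$ you would need $(e,d)\in T$, which holds only for $e\in\{a,d\}$. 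The paper sidesteps this by building an entirely different chain
$g_0=a,\ g_1=a(d+c_1),\ g_2=a(d+c_1)(d+c_2),\ \dots,\ g_n=ad=h_0,\ \dots,\ h_n=d$,
in which the absorption identity $a(d+c_1)\cdots(d+c_j)=a(d+c_1)\cdots(d+c_j)(a+c_{j+1})$ makes consecutive elements differ precisely in an $a\leftrightarrow d$ swap, so that a single application of admissibility to the pair $(a,d)\in T$ witnesses $T$-relatedness; the $R_i$-step follows separately from absorbing a duplicated factor. You need this structurally different chain, not a massaged version of \eqref{B}.

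For the $\mathcal{N}_4$ saving of one factor when $n$ is odd, ``additional flexibility'' is too vague: the actual mechanism is a computation showing $g_{n-1}\mathrel{\overline{R_n\cup R_1}}h_1$ using the $4$-ary term $u$, which collapses to $g_{n-1}\mathrel{R}h_1$ when $R_n=R_1=R$. For the failure of \eqref{10b}, your high-level plan (view $\mathbf{B}$ as a $b$-algebra and enlarge $\beta$ to a $b$-tolerance $\Theta$ not $u$-admissible) matches the paper's strategy, which takes $\Theta=\Lambda\beta$ with $\Lambda$ the tolerance of Example~\ref{ex1}; but you explicitly leave the construction and the combinatorial check open, so this part of your argument is also incomplete as written.
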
 

Before proving Theorem \ref{pari} we shall prove
Proposition  \ref{moregen} below,
 a  more general result
whose formulation, however, is  more involved.
The statement
of Theorem \ref{pari}
suggests that, for 
 $n$ even, $\mathcal B$ and $\mathcal N_4$ behave
 in the same way
and  no essential difference
seems to appear 
in comparison with the previous section.
On the other hand, for $n$ odd,  Theorem   \ref{pari} shows 
that $\mathcal N_4$ satisfies the somewhat stronger identity
 \eqref{10bn}. However, in a sense,
 $\mathcal N_4$  behaves better than $\mathcal B$
for every value of the index $n$, as we shall show in 
  Proposition  \ref{moregen}.
The difference between $\mathcal B$
and $\mathcal N_4$ 
will appear
in a clearer light in Theorem \ref{proprelb}.

After the proof of
Proposition  \ref{moregen},
furnishing the positive side of Theorem \ref{pari},
 we shall present the example of a tolerance
on $(B,b)$, 
 the algebra constructed in the proof of Theorem \ref{bds}. 
The example  shall then be used 
in order to show that 
the bounds in  identity \eqref{10}  are optimal.

For a binary relation $R$, let $R^n$ denote the 
$n$-fold composition of $R$ with itself, that is, 
  $R^n = R \circ_{n} R$.
If $R$ and $S$ are binary relations, let
$ \overline{R  \cup S} $ denote the smallest  
 admissible relation containing both 
$R$ and $ S$.

\begin{proposition} \labbel{moregen}
For every  $n \geq 2$, the  
following identities are satisfied:
\begin{align}\labbel{nmg} 
\text{by $\mathcal B$: \ \ \ } & T  (R_1 \circ  R_2 \circ \dotsc  \circ R_n)  \subseteq 
(T  R_1 \circ T   R_2 \circ \dotsc   \circ
T  R_n )^2 ;
\\
\labbel{bmg} 
\begin{split}
\text{by  $\mathcal N_4$: \ \ }         
 & T  (R_1 \circ  R_2 \circ \dotsc \circ  R _{n-1} \circ R_n)
  \subseteq 
T  R_1 \circ T   R_2 \circ \dotsc \circ T  R _{n-1} \circ 
\\
 & \qquad \quad T (TR_n  \circ TR_1)(\overline{R_n  \cup R_1}) 
\circ T   R_2 \circ \dotsc \circ T  R _{n-1} \circ  TR_n .
\end{split}
\end{align}   
 \end{proposition}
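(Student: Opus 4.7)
The plan is to mimic Baker's chain from the proof of Theorem~\ref{bds}, lifting it from the congruence setting to reflexive admissible relations $R_1, \dotsc, R_n$ and an admissible $T$. The key subtlety in this lift is that $T$ need not be transitive, so each step of the constructed chain must be verified to lie in the intersection relation $TR_i = T \cap R_i$ by a \emph{single} application of admissibility; one cannot chain two $T$-steps through an intermediate element.

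For the $\mathcal B$-identity \eqref{nmg}: I would fix $(a,d) \in T(R_1 \circ \dotsb \circ R_n)$ with a witness $a = c_0 \mathrel{R_1} c_1 \mathrel{R_2} \dotsb \mathrel{R_n} c_n = d$ and $a \mathrel{T} d$, and then write down the two-sided Baker chain
\[
b(a,c_0,d),\ b(a,c_1,d),\ \dotsc,\ b(a,c_n,d) = a \cdot d = b(d,c_0,a),\ b(d,c_1,a),\ \dotsc,\ b(d,c_n,a)
\]
from \eqref{B}. Admissibility of $R_i$ (from $c_{i-1} \mathrel{R_i} c_i$, applied componentwise to $b$) supplies the $R_i$-component of each step, while admissibility of $T$ (from $a \mathrel T d$, leveraged through the identities $b(a,c,a) = a$ and $b(d,c,d) = d$ which freeze the first two arguments) supplies the $T$-component in one shot. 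This places $(a,d)$ in $(TR_1 \circ \dotsb \circ TR_n)^{2}$.

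For the $\mathcal N_4$-identity \eqref{bmg}: the extra factor $TR_{n-1}$ on the left furnishes the bonus datum $c_{n-2} \mathrel T c_{n-1}$ on top of $a \mathrel T d$. The plan is to shorten the Baker chain by one, collapsing the two middle passages $b(a,c_{n-1},d) \to a\cdot d \to b(d,c_1,a)$ through the central element $a \cdot d$ into a single middle step. That step must land in the triple intersection $T \cap (TR_n \circ TR_1) \cap \overline{R_n \cup R_1}$: the factor $TR_n \circ TR_1$ is accounted for via $a \cdot d$ together with $c_{n-1} \mathrel{R_n} c_n = d$ and $c_0 = a \mathrel{R_1} c_1$; $\overline{R_n \cup R_1}$ comes for free because the whole middle step is built from operations applied to pairs of $R_n \cup R_1$; and $T$-membership is established by a single invocation of admissibility of $u$ on a carefully chosen $4$-tuple that absorbs \emph{both} $T$-related pairs $(a,d)$ and $(c_{n-2}, c_{n-1})$ into separate argument slots simultaneously.

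The main obstacle will be pinning down the precise $u$-expression witnessing the middle step. The difficulty is localized entirely in the $T$-membership clause: the non-transitivity of $T$ forces the two $T$-related pairs $(a,d)$ and $(c_{n-2},c_{n-1})$ to be used in parallel within a single $u$-admissibility application, which is possible only because $u$ is $4$-ary. This is exactly why $\mathcal N_4$ supports the sharper bound \eqref{bmg} while $\mathcal B$—having only the ternary term $b$ at its disposal—does not, and it is the source of the subtle discrepancy between the two varieties anticipated in the introduction to this section.
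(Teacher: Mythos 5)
The central gap is in \eqref{nmg}: the claim that admissibility of $T$ supplies the $T$-component of each step of the two-sided Baker chain from \eqref{B} ``in one shot.''  In Theorem~\ref{bds} the $\alpha$-component of the passage from $b(a,c_j,d)$ to $b(a,c_{j+1},d)$ is obtained by going through the intermediate element $a$, namely
$b(a,c_j,d)\mathrel\alpha b(a,c_j,a)=a=b(a,c_{j+1},a)\mathrel\alpha b(a,c_{j+1},d)$,
and this uses \emph{transitivity} of $\alpha$.  With $T$ merely reflexive and admissible this collapse is not available, and the pair $\bigl(b(a,c_j,d),\,b(a,c_{j+1},d)\bigr)$ can genuinely fall outside $T$.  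For a concrete failure take the reduct $(\mathbf C_2^3,b)$ with $a=(1,0,0)$, $c_1=(0,1,0)$, $c_2=(1,0,1)$, $d=(0,0,1)$: then $b(a,c_1,d)=(0,0,0)$ while $b(a,c_2,d)=(1,0,0)$, yet no unary $b$-polynomial $f$ can have $f(a)=(0,0,0)$ and $f(d)=(1,0,0)$ (project onto the first coordinate, where $a_1=1>0=d_1$, and use monotonicity), so these two elements are \emph{not} related by the smallest reflexive admissible $T$ containing $(a,d)$ and the proposed chain breaks.  This is exactly why the paper's proof abandons \eqref{B} and works instead with the cumulative meets $g_i=a(d+c_1)\cdots(d+c_i)$ and $h_i=d(a+c_i)\cdots(a+c_{n-1})$: each consecutive pair in that chain differs by a \emph{single} application of $b$ in which exactly one argument slot moves from $a$ to $d$, so every $T$-step really is one admissibility application.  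You need to show you can recover this feature before the first half of your argument stands.

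The plan for \eqref{bmg} adds two further problems.  First, the ``bonus datum'' $c_{n-2}\mathrel T c_{n-1}$ that you extract from the $T R_{n-1}$ occurring on the left of \eqref{bmg} is almost certainly a misprint for $R_{n-1}$: the derivation of \eqref{10bn} from \eqref{bmg} in the proof of Theorem~\ref{pari}, by substituting $R_1=R_3=\dotsb=R$ and $R_2=R_4=\dotsb=S$, only goes through if the left-hand side is $T(R_1\circ\dotsb\circ R_n)$, and the paper's proof of \eqref{bmg} never invokes any such extra $T$-pair.  Second, you misidentify where the $4$-arity of $u$ is actually needed: the $T$-relation between the two sides of the merged middle step, $g_{n-1}\mathrel T h_1$, is already proved in $\mathcal B$ using only $b$; the role of $u$ is solely to secure the additional factor $\overline{R_n\cup R_1}$, via the rewriting of $g_{n-1}$ as $u(a,a,d,c_{n-1})$ multiplied by lattice factors and the single replacement $u(a,a,d,c_{n-1})\mapsto u(a,c_1,d,d)$.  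The $\mathcal B$/$\mathcal N_4$ discrepancy therefore traces to the $\overline{R_n\cup R_1}$ clause, not to the $T$ clause as you suggest.
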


\begin{proof}
Were $T$ a congruence, the proof of \eqref{1} 
in Theorem \ref{bds} 
would give a proof of  \eqref{nmg}, 
since in the proof of Theorem \ref{bds}
we have not used the assumption that $\beta$ and $\gamma$
 are congruences, we have
only used that $\beta$ and $\gamma$  are 
 admissible relations.
Dealing with many relations instead of just two relations
presents no new difficulty.
At certain places  in Theorem \ref{bds} we need transitivity
of $\beta$, but this is not necessary here, according the formulations 
of Theorem \ref{pari} and Proposition \ref{moregen}.
For example, were both $T$ and $R$  transitive in 
equation \eqref{10}, then the two adjacent  
  occurrences of $TR$ would absorb into one.
This is the reason why the indices in Theorem \ref{bds}
can be improved by one in the case $n$ odd,
since $\alpha$ and $\beta$ there are congruences, hence transitive.    
But we are not assuming transitivity in Theorem \ref{pari}
and in Proposition \ref{moregen}; correspondingly, we are not
asking for the stronger conclusions. 

We have to give a proof of
Proposition \ref{moregen}  in the case when
$T$ is just an admissible relation.
This involves considering a
 different sequence of elements
in comparison with the sequence described in \eqref{B}. 
We shall first  give the proof 
of \eqref{nmg} 
 for $\mathcal B$;
then we shall improve
\eqref{nmg}
to 
\eqref{bmg}
for 
 $\mathcal N_4$
by an additional 
and somewhat delicate argument.

Suppose that 
$(a,d) \in T  (R_1 \circ  R_2 \circ \dotsc  \circ R_n)$
in some algebra in $\mathcal B$.
This means that
$a \mathrel T d$ and 
that there are elements $c_i$ such that  
 $a= c_0 \mathrel { R_1 } c_1 \mathrel { R_2} c_2 
\dots
\mathrel { R_{n-1} } 
c _{n-1} \mathrel { R_n } c_n =d  $.
In what follows, we shall
write, say,
$a(d+c_1)$
for 
$b(a,d,c_1) $, and
$a(d+c_1)(d+c_2)$
for 
$b(b(a,d,c_1),d,c_2) $.
Equations like
$a(d+c_1)(d+c_1) = a(d+c_1)$
or 
$a(d+c_1)(a+c_2) = a(d+c_1)$
hold in $\mathcal B$ 
since corresponding equations hold
in lattices.
 Consider the following elements.

\begin{align*}       
g_0&=a                           & 
 g_1&= a(d+c_1) \\
 g_2&=  a(d+c_1)(d+c_2)   &  
 &\dots \\
 g_{n-1}&=  a(d+c_1)(d+c_2)\dots(d+c_{n-1})                             &  
 g_n &=h_0= ad \\
 h_{1}&=  d(a+c_1)(a+c_2)\dots(a+c_{n-1})                         &  
 h_{2}&=  d(a+c_2)\dots(a+c_{n-1})\\
&\dots                            
\quad \quad  h_{n-1}=  d(a+c_{n-1})     &    
 h_{n}&=  d
\end{align*} 

We have $g_i \mathrel { TR _{i+1}} g _{i+1}  $ for $i<n$ and similarly for the
$h_i$'s.  Indeed, for example, 
\begin{align*}
 g_1= a(d+c_1)= a(d+c_1)(a+c_2) &\mathrel { \hskip 2.5pt T \hskip 2.5pt }   
a(d+c_1)(d+c_2) =  g_2, \text{ and}
\\  
 g_1= a(d+c_1)= a(d+c_1)(d+c_1) &\mathrel { R_2}   
a(d+c_1)(d+c_2) =  g_2.
 \end{align*}
Notice that, in the definition of the 
$g_i$'s, when going from 
 $g_{n-1}$ to $g_n$
we follow the preceding pattern; indeed, according to the pattern,
$g_n$ would be
$a(d+c_1)\dots(d+c_{n-1})(d+c_n) = 
a(d+c_1)\dots(d+c_{n-1}) \allowbreak (d+d) $ which in fact is equal to 
$ ad$.   Thus 
we have 
$ (a,g_n) \in T  R_1 \circ T   R_2 \circ \dotsc   \circ
T  R_n $ and
$ (h_0,d) \in T  R_1 \circ T   R_2 \circ \dotsc   \circ
T  R_n $, hence 
 $(a,d) \in (T  R_1 \circ T   R_2 \circ \dotsc   \circ
T  R_n ) ^2   $,
since $g_n= h_0$.

We have proved \eqref{nmg}. 

In passing, let us remark that the above elements 
$g_0, g_1, \dots, g_n=h_0,  \allowbreak \dots, h _{n-1}, h_n $
 satisfy some additional relation identities.
We have that each element in the above chain
is $T$-related with every element which follows. For example,
 $g _{n-1} \mathrel { T } h_1 $,
since
\begin{multline*}
 g_{n-1}=   a(d+c_1)\dots(d+c_{n-1}) = 
a(d+c_1)\dots(d+c_{n-1}) (a+c_1)\dots(a+c_{n-1}) 
\mathrel { T } 
\\
d(d+c_1)\dots(d+c_{n-1}) (a+c_1)\dots(a+c_{n-1}) 
=  d(a+c_1)\dots(a+c_{n-1}) = h_{1}.
 \end{multline*}   

All the other relations are proved in a similar way.

The proof of equation \eqref{bmg}
involves the same chain of elements,
this time working in 
$\mathcal N_4$. 
Notice that the above elements are expressible in $\mathcal N_4$,
since   $b(x_1, x_3, x_4) = u(x_1, x_1, x_3, x_4)= 
x_1(x_3+x_4)$.
In the above-displayed formula we have  showed 
$g_{n-1} \mathrel {  T }  h_1$.
It remains  to show that, in addition,
$g_{n-1} \mathrel { \overline{R_n \cup R_1} } h_1$. 
 In order to prove this relation,
we shall write 
$g_{n-1}=   a(d+c_1)\dots(d+c_{n-1}) $ 
as
$u(a,a,d,c _{n-1} ) \cdot (d+c_1)\dots(d+c_{n-2}) $.
This formula should be interpreted 
in the sense that, say,
$u(a,a,d,c _{n-1} ) \cdot (d+c_1)$
is an abbreviation for
$b(u(a,a,d,c _{n-1} ), d, c_1)$,
and we can add further factors 
of the form 
$ (d+c_j)$ by  iterated applications of $b$,
as we did in the first part of the proof.   
The identities we shall use will all follow
from corresponding identities holding in lattices.
Now we compute
\begin{align*} 
g_{n-1}= a(d+c_1) &\dots(d+c_{n-2}) (d+c_{n-1})
= \\
a(d+c_{n-1})(d+c_1) &\dots(d+c_{n-2}) 
(a+c_2)\dots(a+c_{n-1}) =
\\ 
u(a,a,d,c _{n-1} ) \cdot (d+c_1)&\dots(d+c_{n-2}) 
(a+c_2)\dots(a+c_{n-1})  \mathrel { \overline{R_n \cup R_1} }
\\
u(a,c_1,d,d) \cdot (d+c_1)&\dots(d+c_{n-2}) 
(a+c_2)\dots(a+c_{n-1})  =
\\
d(a+c_1)(d+c_1)&\dots(d+c_{n-2}) 
(a+c_2)\dots(a+c_{n-1}) = 
\\
&\quad \ \ \ \, d(a+c_1)
(a+c_2)\dots (a+c_{n-1}) = h_1.
\end{align*}   
Thus 
$g _{n-1} \mathrel { \overline{R_n \cup R_1} } h_1 $ 
and  the proof of 
Proposition \ref{moregen} is complete. 
\end{proof}

Recall the definitions of $ \alpha $, $\beta$,  $ \gamma $, 
$B$, $B ^ \uparrow $
from  the proof of Theorem \ref{bds}.
Let  $B ^ \downarrow = B \setminus B ^ \uparrow$
be the set of those elements of $B$ 
with a third $ \downarrow $ component. Let
$E= \{ e_0, \dots  e_n\} $ and $F= \{ f_0, \dots  f_n\} $,
where  $e_0, \dots , f_n$ are the elements  in
the displayed list
\eqref{bu} in the proof of Theorem \ref{bds}.  
 We now present the example of a tolerance
on $(B,b)$.
Recall that $(B, b)$ is an algebra in $\mathcal B$. 

\begin{example} \labbel{ex1} 
Let $ \Lambda  $  be the binary relation on $B$ defined
as follows.
 Two elements $x$ and $y$ of $B$ are $ \Lambda $-related if and only if 
either
 
(a) both $x $ and $ y $ belong to $  E$, or 

(b) both $x $ and $ y $ belong to $  F$, or

(c) at least one of $x$ and  $y$ 
belongs to $B ^ \downarrow$. 

We are going to show that $\Lambda$ is a tolerance on $(B, b)$. 

Indeed,  $\Lambda$  
is clearly  symmetric and reflexive,
since $B ^ \uparrow =  E \cup F$,
hence  $B ^ \downarrow = B \setminus ( E \cup F)$.
We have to check that $\Lambda$   is admissible.
Suppose that $x_1 \mathrel \Lambda y_1 $, $x_2 \mathrel \Lambda  y_2 $
and $x_3 \mathrel \Lambda y_3 $ are elements of $B$. 
Letting  $x= b(x_1,x_2, x_3)$
and
 $y= b(y_1,y_2, y_3)$,
we have to show that
$x \mathrel \Lambda y$. 
If either 
$x \in B ^ \downarrow$ 
or $y \in B ^ \downarrow$, there
is nothing to prove. 
If  both
  $x= b(x_1,x_2, x_3)$
and
 $y= b(y_1,y_2, y_3)$
belong to $B ^ \uparrow $,
that is, they have a last $ \uparrow $
component, then both
$x_1$ and  $y_1$   
have a last $ \uparrow $
component, that is, 
$x_1, y_1 \in B ^ \uparrow$.   
By the definition of $\Lambda$, 
either 
$ x_1, y_1 \in E$
or $x_1, y_1\in F$, and, correspondingly, 
$ x_1 $ and $  y_1 $
either  both have a null second component
or  have a null first component;
hence this applies to $x$ and $y$, too.
By the description of $B ^ \uparrow $
in the proof of Theorem \ref{bds},
if $x$ and $y$ have a null second component, then they both
belong to $E$, and 
if $x$ and $y$ have a null first component, then they both
belong to $F$.
We have showed that 
$ x \mathrel \Lambda y$, thus $\Lambda$ is admissible
(we have not used the assumption that  $x_2 \mathrel \Lambda  y_2 $
and $x_3 \mathrel \Lambda y_3 $).

On the other hand, $\Lambda$ is not admissible on $(B, u)$;
see Remark \ref{lr}.
\end{example}

 \begin{proof}[Proof of Theorem \ref{pari}] 
The positive result that equations \eqref{9} - \eqref{10bn}
hold is an immediate consequence of Proposition \ref{moregen},
taking   $R_1= R_3= \dots = R$ 
and $R_2= R_4= \dots = S$.
Notice that if $n$ is odd, then 
$R_n  =  R_1 = R$, hence the factor 
$T (TR_n  \circ TR_1)(\overline{R_n  \cup R_1})$ 
in \eqref{bmg} 
becomes 
$TR$,
thus we get  
\eqref{10bn} from  \eqref{bmg}.

The bounds given by
equations
\eqref{9} and \eqref{10bn} are
optimal
even in the case of congruences,
as shown by equations \eqref{4} and  \eqref{6}
in Theorem \ref{bds}. 
As soon as we show that
\eqref{10b} fails, we get that 
\eqref{10} is the best possible result; in particular, 
 the two
adjacent
occurrences of  $TR$ in the middle of  the right-hand side of 
 \eqref{10} do not always  ``absorb into one'',
even in the case when $T$ is a congruence and
$R$ is a tolerance.

To show that \eqref{10b} can fail, consider the construction in the proof
 of Theorem \ref{bds}  in the case $n$ odd, this time taking
 $\mathbf B= (B, b) $
in place of 
 $\mathbf B= (B, u) $.
Let $\Theta =  \Lambda  \beta$,
where $\Lambda$ is the tolerance constructed in
Example \ref{ex1}.  
Proceed as in the proof of Theorem \ref{bds}  
and notice that
$(c_0,c_n) \in \alpha  ( \Theta  \circ_n \gamma )$,  
as witnessed, again, by $c_1, c_2, \dots$\  In the case at hand
$e_{n-1}= (1,0, \uparrow )$ and $f_1= (0,1, \uparrow )$
 are not $\Theta$-related, 
since they are not
$\Lambda$-related. Hence here we cannot
skip the passage from 
$e_{n-1}$ to $e_{n}$, as we did
 in the case $n$ odd in the proof of Theorem \ref{bds}.
Of course, we do have
$(c_0,c_n) \in 
( \alpha  \Theta  \circ \alpha \gamma \circ {\stackrel{n}{\dots}} \circ \alpha  \Theta   )
\circ
( \alpha  \Theta  \circ \alpha \gamma \circ {\stackrel{n}{\dots}} \circ \alpha  \Theta   )$,
 as shown be equation \eqref{10}; 
however, we lose one more turn if we want that 
$\alpha  \Theta   $ and $  \alpha  \gamma $ strictly alternate
on the right-hand side of \eqref{10b},
hence we cannot have
$(c_0,c_n) \in 
\alpha  \Theta  \circ_{2n} \alpha \gamma$.
 \end{proof}  

Notice that the above argument  shows that
even the identity
$ \alpha (\Theta  \circ 
(\alpha \gamma  \circ \alpha \Theta  \circ {\stackrel{n-2} {\dots}} \circ 
\alpha \gamma ) \circ  \Theta ) \subseteq  
\alpha \Theta   \circ _{2n} \alpha \gamma    $
 fails in $\mathcal B$, for $n$ odd.

\begin{remark} \labbel{contol!} 
In \cite{contol} we have showed that, under a fairly general hypothesis,
any congruence identity is equivalent to the corresponding tolerance
identity, provided that only tolerances \emph{representable} 
as $R \circ R ^\smallsmile $ are considered in the latter identity.
Here $R ^\smallsmile $ denotes the \emph{converse}
of $R$. 
 
Equations \eqref{4} in Theorem \ref{bds}
and \eqref{10b} in Theorem   \ref{pari}
show that, in general, the assumption of representability 
is necessary in the results from \cite{contol}.
A similar counterexample has been presented in
\cite{ia}.

It  follows from \cite{contol}
that the tolerance $\Theta$ used in the proof of Theorem \ref{pari}
is not representable.
It can be checked directly that in $(B,b)$  neither $\Theta$ for $n$ odd,
nor $\Lambda$    for every $n$ are representable, where $\Lambda$ 
is the tolerance constructed in Example \ref{ex1}. 
In fact, if $R$ is reflexive and admissible,
$c_0 \mathrel { R \circ R ^\smallsmile }  c_1$ and  
$c_{n-1} \mathrel { R \circ R ^\smallsmile }  c_n$,
then
$e _{n-1} \mathrel { R \circ R ^\smallsmile }  f_1$.  
Indeed, if 
$c_0 \mathrel { R } g \mathrel { R ^\smallsmile }  c_1$ and  
$c_{n-1} \mathrel { R } h \mathrel {R ^\smallsmile }  c_n$,
then $c_n \mathrel R h$ and $ g \mathrel { R ^\smallsmile }  c_0$, hence  
$e_{n-1} = c_0 (c_{n-1} + c_n) \mathrel R   g(h + h) = gh =
h(g+g) \mathrel {R ^\smallsmile }  c_n (c_0 + c_1) = f_1 $. 
\end{remark}

\section{Identities  with just two relations} \labbel{two}

If $R$ is a congruence,
or just a transitive relation, then
then there is no point in considering identities of the form
$T ( R \circ R ) \subseteq  something$,
since \mbox{$R \circ R = R$}. In passing, let us point out
that  this latter identity $R \circ R = R$ 
 is equivalent to congruence permutability, as noticed
independently by  Hutchinson \cite{H} and  Werner \cite{W}.
 If we only suppose that $R$ is admissible, 
many more identities of the  form $T ( R \circ R ) \subseteq  something$
become interesting.
For example, we showed in \cite{ricm}
that a variety $\mathcal V$ is congruence modular 
if and only if there is some $k$ such that $\mathcal V$
satisfies the identity 
$\Theta (R \circ R) \subseteq (\Theta R)^k $.

In the present section we evaluate the best possible bounds 
for identities of the above kind both in 
$\mathcal B$ (equivalently, $\mathcal {B}^d$)  and  $\mathcal N_4$
(equivalently  $\mathcal N_4^d$). 
In a certain respect, here the situation 
is simpler than in the preceding sections, since we do not need the division
into the two cases $n$ odd and  $n$ even;
moreover, the bounds for  $\mathcal N_4$ 
are always better than the bounds for 
$\mathcal B$. Notice that all the identities considered
in  the present section are strictly weaker than
congruence distributivity, since they might hold in 
non distributive congruence modular varieties; actually, all the identities below
hold in congruence permutable varieties.

\begin{theorem} \labbel{proprelb} 
For  $n \geq 2$, 
 the  
following identities are satisfied:
\begin{align} \labbel{11}
\text{by $\mathcal B$: \ \ \ } &
 T R^n \subseteq  (T R )^{2n}   
\\
\labbel{11n}
\text{by $\mathcal N_4$: \ \ \ } &
 T R^n \subseteq  (T R )^{2n-1}   
\end{align}    
and the exponents on the right 
are best possible; actually, 
\begin{align}     
\labbel {12ii}
\text{$\mathcal B$
fails to satisfy \ \ \ } 
&\alpha ( \Theta  \circ (\alpha \Theta ) ^{n-2} \circ  \Theta  ) 
\subseteq 
(\alpha \Theta ) ^{2n-1}, \text{  and} 
\\
\labbel {12iin}
\text{$\mathcal N_4$
fails to satisfy \ \ \ } 
&\alpha ( \Theta  \circ (\alpha \Theta ) ^{n-2} \circ  \Theta  ) 
\subseteq 
(\alpha \Theta ) ^{2n-2}.
 \end{align} 
\end{theorem}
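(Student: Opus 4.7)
The plan is as follows. For the positive inclusions, I specialize Proposition~\ref{moregen} to $R_1 = R_2 = \cdots = R_n = R$. In the $\mathcal B$ case, \eqref{nmg} gives $TR^n \subseteq (TR)^{2n}$ directly, proving \eqref{11}. In the $\mathcal N_4$ case, the proof of \eqref{bmg} specializes to show that the elements $g_{n-1}$ and $h_1$ appearing there are $TR$-related (since $\overline{R_n \cup R_1} = R$ when $R_1 = R_n = R$, and since $T(TR \circ TR)R \subseteq T \cap R = TR$), so the joining vertex $g_n = h_0$ can be skipped and the chain length drops from $2n$ to $2n-1$, proving \eqref{11n}.

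For the counterexamples \eqref{12ii} and \eqref{12iin}, I reuse the algebra $\mathbf B$ and the congruence $\alpha$ constructed in the proof of Theorem~\ref{bds}, and take $\Theta$ to be the smallest admissible reflexive symmetric relation on $\mathbf B$ containing the generator pairs $\{(c_i, c_{i+1}) : i = 0, \dots, n-1\}$. The chain $c_0, c_1, \dots, c_n$ immediately witnesses $(c_0, c_n) \in \alpha(\Theta \circ (\alpha\Theta)^{n-2} \circ \Theta)$, since $c_0 \alpha c_n$ and $c_i \alpha c_{i+1}$ hold for $1 \leq i \leq n-2$ (the intermediate $c_i$ all share the third coordinate $\downarrow$). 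The key structural fact I need is a \emph{sparseness claim}: the non-diagonal pairs of $\Theta$ lying in $B^\uparrow \times B^\uparrow$ are exactly the consecutive chain edges $(e_i, e_{i \pm 1})$ and $(f_j, f_{j \pm 1})$, together with, only in the $\mathcal N_4$ case, the single additional shortcut pair $(e_{n-1}, f_1)$ and its symmetric. This extra $\mathcal N_4$ pair is produced by exactly the $u$-computation that appears in the proof of Proposition~\ref{moregen}: applying $u$ componentwise to the quadruples $(a, a, d, c_{n-1})$ and $(a, c_1, d, d)$---whose componentwise pairs all lie in $\Theta$, each being either diagonal or one of the generators $(c_0, c_1)$ or $(c_{n-1}, c_n)$---yields by direct calculation the pair $(e_{n-1}, f_1)$.

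Granting the sparseness claim, the conclusion follows at once. Since $\alpha$ separates $B^\uparrow$ from $B^\downarrow$ and is the total relation on $B^\uparrow$, any $(\alpha \Theta)$-chain from $c_0$ to $c_n$ must stay inside $B^\uparrow$ and traverse the chain graph $e_0 - e_1 - \cdots - e_n = f_0 - f_1 - \cdots - f_n$. Its length is therefore at least $2n$ in the $\mathcal B$ case and at least $2n-1$ in the $\mathcal N_4$ case (where the single shortcut saves one step), yielding the failures of \eqref{12ii} and \eqref{12iin} respectively.

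The hard part will be the sparseness claim. For $\mathcal B$ I plan to induct on the stages of building $\Theta$: if $b$ is applied componentwise to three $\Theta$-pairs and both outputs land in $B^\uparrow$, then the first-argument pair must already lie in $B^\uparrow \times B^\uparrow$, so by the inductive hypothesis it is diagonal or a chain edge within $E$ or within $F$, and a coordinate analysis of $b(x_1, x_2, x_3) = x_1 \wedge (x_2 \vee x_3)$ then forces the output pair to lie again within $E$, within $F$, or to drop into $B^\downarrow$, never producing a genuine cross pair between $E \setminus \{e_n\}$ and $F \setminus \{f_0\}$. For $\mathcal N_4$ the corresponding case analysis for the four-ary operation $u$ is more intricate but confines the new cross pairs to exactly the one identified above. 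Once the sparseness claim is settled, the remaining shortest-path calculation on the chain graph is routine.
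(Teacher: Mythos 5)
Your derivation of \eqref{11} and \eqref{11n} matches the paper's: specialize Proposition~\ref{moregen} to $R_1=\dots=R_n=R$, noting that the middle factor $T(TR\circ TR)(\overline{R\cup R})$ collapses to $TR$. That part is fine.

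For the failures \eqref{12ii} and \eqref{12iin} your route is genuinely different and, as you yourself flag, incomplete. The paper does not attempt to characterize a generated tolerance; it names two concrete tolerances up front --- $\Psi$, where two elements are related iff their first two coordinates each differ by at most~$1$ (a tolerance on $(B,u)$ because it is the restriction to $B$ of a product tolerance on $\mathbf L$), and, in the $\mathcal B$ case, $\Lambda\Psi$ with $\Lambda$ the tolerance checked in Example~\ref{ex1} --- and then reads off the restriction to $B^\uparrow$ directly from the coordinate description. You instead take the smallest tolerance $\Theta$ containing the chain pairs $(c_i,c_{i+1})$, so the whole weight of the argument falls on your ``sparseness claim,'' which you leave as a plan. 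That is the genuine gap. It is bridgeable, and more cheaply than you anticipate: since $\Psi$ is a tolerance on $(B,u)$ containing all your generators, one has $\Theta\subseteq\Psi$ automatically, and the restriction of $\Psi$ to $B^\uparrow$ is already the path on $e_0,e_1,\dots,e_n=f_0,f_1,\dots,f_n$ together with the single extra edge joining $e_{n-1}$ and $f_1$; this settles the $\mathcal N_4$ side at once, and the $u$-case analysis you expect to be ``more intricate'' is in fact unnecessary. For $\mathcal B$, the coordinate induction you sketch is essentially a re-derivation of the fact that $\Lambda$ is admissible on $(B,b)$ --- precisely the content of Example~\ref{ex1}; with that in hand, $\Theta\subseteq\Lambda\Psi$ rules out the cross edge immediately. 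Your explicit computation that $u$ applied componentwise to $(a,a,d,c_{n-1})$ and $(a,c_1,d,d)$ yields the pair $(e_{n-1},f_1)$ is correct and gives a clean direct proof that the shortcut lies in the $\mathcal N_4$-generated tolerance. In sum: the overall strategy and the final shortest-path count are sound, but the sparseness claim must actually be proved, and the most economical proofs route back through the same two auxiliary tolerances the paper works with.
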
  

\begin{proof} 
Identities \eqref{11} and \eqref{11n}  are immediate
 from identities \eqref{9} - \eqref{10bn}
in Theorem \ref{pari}, taking
$S=R$; they can also be obtained  directly from Proposition \ref{moregen}. 

We first show that  \eqref{12iin}  can fail,
hence the bound in \eqref{11n} is best possible. 
Consider again the counterexample 
$(B, u)$ constructed in the proof
of Theorem \ref{bds}.
 Let $\Psi$ be the binary relation on $B$
defined in such a way that two elements
$x$ and $y$ in $B$  are $\Psi$-related if and only if

(d) for each $\ell=1,2$, the components
$x_ \ell$ and  $y_ \ell$
differ at most by $1$.

We claim that  $\Psi$ is a tolerance on $\mathbf B$.
Indeed,  condition 
(d) defines a tolerance $\Psi_L$ on the lattice
$\mathbf L=  \mathbf C_{n+1} \times \mathbf C_{n+1} 
\times \mathbf C_2 $,
since $\Psi_L$ is a product of
tolerances on the factors.
Hence   $\Psi_L$ is a tolerance   
on the polynomial reduct 
$(L, u)$ and
 $\Psi$,
being  the restriction of 
 $\Psi_L$ to $B$,
 is a tolerance, too.

Now we have 
$(c_0,c_n) \in \alpha ( \Psi \circ (\alpha \Psi) ^{n-2} \circ  \Psi )$, 
again, 
as witnessed by $c_1, c_2, \dots$\ 
On the other hand, as in the proof of Theorem \ref{bds},
the only other element  $\alpha \Psi$-connected to 
$c_0=e_0$ is $e_1$,
the only other element  $\alpha \Psi$-connected to 
$e_1$ is $e_2$ and so on, until
we reach $e_{n-1}$,
which is   $\alpha \Psi$-connected only to
$e_{n-2}$ (but this has no use), to $e_n=f_0$ and to $f_1$.  
We get the fastest path going directly through
$f_1$; in any case, we need $2n-1$ steps,
thus   
$(c_0,c_n) \in (\alpha \Psi) ^{2n-2} $ fails in 
$\mathcal {N}_4$.
Hence  \eqref{12iin} fails with 
$\Psi$ in place of $\Theta$.

In order to 
disprove 
the identity in \eqref{12ii}, let us work 
in $(B,b)$  instead.
Recall that $(B,b) \in \mathcal {B}$.
The relation $\Psi$
defined above is a tolerance on 
   $(B,b)$, being a tolerance on 
$(B,u)$.
Let $\Theta= \Lambda \Psi$, 
where $\Lambda$ is the tolerance defined 
in Example \ref{ex1}. 
As above,
we have
$(c_0,c_n) \in \alpha ( \Theta  \circ (\alpha \Theta ) ^{n-2} \circ  \Theta  )$.
We have that 
$(c_0,c_n) \in (\alpha \Theta ) ^{2n-1}$ fails,
since any chain of   
$\alpha \Theta$-related elements
from $c_0 $ to  $ c_n $ 
must contain all the 
elements of $B ^ \uparrow $.
The difference 
with the previous case dealing with 
$\mathcal {N}_4$  is that here  
$e _{n-1}  $ 
and  
$f _{1}  $ 
 are not $\Theta$-related, being not
$\Lambda$-related,
hence one more step is necessary.
\end{proof}

\begin{remark} \labbel{lr}
In Example \ref{ex1}
we have seen that $\Lambda$   
is a tolerance on $(B,b)$. 
It follows from the above proof that $\Lambda$ is not a
tolerance on
 $(B,u)$.
It is easy to see  directly that $\Lambda$ is not even compatible
in $(B,u)$; otherwise, we would get
$e _{n-1} = u(c_0,c_0,c_{n-1},c_n)
\mathrel {  \Lambda} u(c_0,c_1,c_{n},c_n) =f_1 $,
a contradiction. 
 \end{remark}

As a small improvement on some results
in this and in the previous section, notice that in the identities in
\eqref{9}, \eqref{10}, \eqref{11} and \eqref{nmg} 
it  is enough to assume that $T$, $R$ and $S$  
are set-theoretical unions of admissible relations.
Indeed, in the proofs
only one element is moved at a time.
Cf.\ \cite{uar}. 

The following lemma provides a simpler argument
to show that the exponent on the right in the identity in  \eqref{11n}
cannot be improved. 
Recall that if $T$ is a binary relation on some algebra,
$ \overline{T} $ denotes the smallest reflexive and admissible relation 
containing $T$. The definition of $n$-modularity has been 
recalled shortly before Corollary \ref{dm}.

\begin{lemma} \labbel{rm}
Let $\mathcal V$ be any variety.
  \begin{enumerate}  
  \item   
 If $\mathcal V$ satisfies
$\alpha(R \circ R) \subseteq (\alpha R) ^{k} $,
then $\mathcal V$ is $2k$-modular.  

\item
 More generally, if $\mathcal V$ satisfies
$\alpha(R \circ_n R) \subseteq (\alpha R) ^{k} $,
for some $n \geq 2$,
then $\mathcal V$ satisfies
$\alpha ( \beta \circ _{2n}  \alpha \gamma )
\subseteq 
\alpha \beta \circ _{2k} \alpha \gamma$. 

  \item   
 If $k \geq 2$ and  $\mathcal V$ satisfies
$\alpha(R \circ S) \subseteq 
\alpha  R \circ (\alpha ( \overline{R \cup S} )) ^{k-1}  $,
then $\mathcal V$ is $2k-1$-modular.  
  \end{enumerate} 
 \end{lemma}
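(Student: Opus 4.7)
The plan is to derive each part by substituting cleverly chosen composites of $\beta$ and $\alpha\gamma$ into the given hypothesis, and then using transitivity of $\alpha$ to promote each $\beta$-step in the resulting chain to an $\alpha\beta$-step. The repeated trick is that if the endpoints of a short $\alpha\gamma$-$\beta$ segment are $\alpha$-related, then so is the midpoint, because $\alpha\gamma\subseteq\alpha$.

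For part~(2), I would set $R=\beta\circ\alpha\gamma$. This is reflexive (both factors are) and admissible (a composition of admissible relations is admissible). The crucial identity is $R\circ_n R=\beta\circ_{2n}\alpha\gamma$, since the $2n$ factors interleave as $\beta,\alpha\gamma,\beta,\alpha\gamma,\dots$ Given $(x,y)\in\alpha(\beta\circ_{2n}\alpha\gamma)=\alpha(R\circ_n R)$, the hypothesis produces $x=z_0,z_1,\dots,z_k=y$ with each $(z_i,z_{i+1})\in\alpha\cap R$, hence $z_i\mathrel\alpha z_{i+1}$ and $z_i\mathrel\beta v_i\mathrel{\alpha\gamma}z_{i+1}$ for some $v_i$. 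Since $\alpha\gamma\subseteq\alpha$, we get $v_i\mathrel\alpha z_{i+1}$, and composing with $z_i\mathrel\alpha z_{i+1}$ yields $v_i\mathrel\alpha z_i$ by transitivity of $\alpha$; therefore $z_i\mathrel{\alpha\beta}v_i\mathrel{\alpha\gamma}z_{i+1}$. Concatenating these $k$ blocks gives a chain of $2k$ factors alternating $\alpha\beta,\alpha\gamma$, witnessing $(x,y)\in\alpha\beta\circ_{2k}\alpha\gamma$.

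Part~(1) follows as an immediate corollary: setting $n=2$ in (2) gives $\alpha(\beta\circ_4\alpha\gamma)\subseteq\alpha\beta\circ_{2k}\alpha\gamma$, and by reflexivity of $\alpha\gamma$ one has $\beta\circ\alpha\gamma\circ\beta\subseteq\beta\circ_4\alpha\gamma$, so $2k$-modularity holds.

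For part~(3), I would set $R=\beta$ and $S=\alpha\gamma\circ\beta$; both are reflexive and admissible, and $\overline{R\cup S}=S$ since $S$ is already admissible and contains $R=\beta$ (using reflexivity of $\alpha\gamma$). The hypothesis then yields
\[
\alpha(\beta\circ\alpha\gamma\circ\beta)=\alpha(R\circ S)\subseteq \alpha\beta\circ\bigl(\alpha(\alpha\gamma\circ\beta)\bigr)^{k-1}.
\]
A single application of the same transitivity trick gives $\alpha(\alpha\gamma\circ\beta)\subseteq\alpha\gamma\circ\alpha\beta$: if $x\mathrel\alpha y$ and $x\mathrel{\alpha\gamma}z\mathrel\beta y$, then $z\mathrel\alpha x\mathrel\alpha y$, hence $z\mathrel{\alpha\beta}y$. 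Substituting, the right-hand side is contained in $\alpha\beta\circ(\alpha\gamma\circ\alpha\beta)^{k-1}=\alpha\beta\circ_{2k-1}\alpha\gamma$, proving $(2k-1)$-modularity. No step presents a genuine obstacle; the only items requiring care are checking admissibility of the chosen composites and accurately bookkeeping the number of factors produced on each side.
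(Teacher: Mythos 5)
Your proof is correct and follows essentially the same route as the paper: the same substitutions ($R=\beta\circ\alpha\gamma$ for parts (1)–(2), $R=\beta$ and $S=\alpha\gamma\circ\beta$ for part (3)) and the same key observation that transitivity of $\alpha$ together with $\alpha\gamma\subseteq\alpha$ upgrades each $\beta$-step to an $\alpha\beta$-step, which is just the identity $\alpha(\beta\circ\alpha\gamma)=\alpha\beta\circ\alpha\gamma$ the paper invokes. The only cosmetic difference is that you deduce (1) from (2) by specializing $n=2$, while the paper proves (1) directly and remarks that (2) is proved the same way.
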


\begin{proof} 
(1)
Taking $R= \beta \circ \alpha \gamma $,
we have
$\alpha (\beta \circ \alpha \gamma)=
\alpha \beta \circ \alpha \gamma$, hence 
\begin{multline*} 
\alpha ( \beta \circ \alpha \gamma \circ \beta ) \subseteq 
 \alpha ( \beta \circ \alpha \gamma \circ \beta  \circ \alpha \gamma )=
\alpha (R \circ R)  \subseteq 
\\
(\alpha R) ^{k} =  (\alpha (\beta \circ \alpha \gamma)) ^{k} =
(\alpha \beta \circ \alpha \gamma) ^{k} =
\alpha \beta \circ _{2k} \alpha \gamma . 
 \end{multline*}   

(2) is proved in the same way.

(3) Take  $R = \beta   $,
 $S= \alpha \gamma \circ \beta $
and observe that
$ R \cup S = \alpha \gamma \circ \beta $
is admissible.  
\end{proof}

Thus $\mathcal N_4$
fails to satisfy 
 $ T R^2 \subseteq  (T R )^{2} $,
since otherwise 
$\mathcal N_4$ would be 
$4$-modular, by \ref{rm}(1), contradicting Corollary \ref{dm}.  

More generally, equation \eqref{11n} cannot be improved to
$ T R^n \subseteq  (T R )^{2n-2} $, 
since otherwise
Lemma \ref{rm}(2)
would give 
\begin{equation}\labbel{blu}    
\alpha (\beta \circ (\alpha \gamma \circ
 \alpha \beta \circ  {\stackrel{2n-3}{\dots}} \circ \alpha \gamma) \circ  \beta  ) 
\subseteq 
\alpha ( \beta \circ _{2n}  \alpha \gamma )
\subseteq ^{\ref{rm}(2)} 
\alpha \beta \circ _{4n-4} \alpha \gamma,
  \end{equation}
where the superscript in 
$\subseteq ^{\ref{rm}(2)} $
means that we are applying 
item (2) in Lemma \ref{rm}. 
Taking $m=2n-1$ in 
\eqref{blu},
we get  $2n-3 =m-2 $
and $4n-4 = 2m -2$,  
thus equation \eqref{blu} becomes  
equation \eqref{6}  in Theorem \ref{bds} with $m$ in place of $n$ 
and Theorem \ref{bds} shows that this equation fails for $\mathcal N_4$.

\begin{remark} \labbel{tolmod}
Recall that $R ^\smallsmile $ denotes the \emph{converse}
of the relation $R$. 
It is not difficult to show that a variety $\mathcal V$ 
is  
$k+1$-modular if and only if
 $\mathcal V$ satisfies the identity
\begin{equation}\labbel{ri}
     \alpha (R \circ R ^\smallsmile ) \subseteq 
\alpha  R \circ_{k} \alpha  R ^\smallsmile .
  \end{equation}
See \cite{ricm}, 
where it is also shown that
 $\alpha$  can be equivalently taken to vary among tolerances.
If we let $R=\Theta$
be a tolerance in \eqref{ri},
we get 
\begin{equation}\labbel{ric}    
 \alpha ( \Theta \circ \Theta )
\subseteq 
( \alpha \Theta) ^{k}. 
  \end{equation}

Clearly, in turn, \eqref{ric}
implies back congruence modularity;
actually, \eqref{ric}
implies $2k+2$-modularity (perhaps the bound $2k+2$ can be improved).
Indeed, taking $\Theta=  \alpha \gamma  \circ \beta \circ \alpha \gamma $
in \eqref{ric} we get
\begin{multline*}\labbel{} 
\alpha ( \beta \circ \alpha \gamma \circ \beta )
\subseteq
\alpha ( \alpha \gamma  \circ \beta \circ \alpha \gamma \circ \beta \circ \alpha \gamma )
 =
 \alpha ( \Theta \circ \Theta )
\subseteq ^{\eqref{ric}} 
\\
( \alpha \Theta) ^{k}=
(\alpha \gamma  \circ \alpha \beta \circ  \alpha \gamma )^{k}
= \alpha \gamma  \circ _{2k+1} \alpha \beta 
\subseteq 
 \alpha \beta  \circ _{2k+2} \alpha \gamma .
\end{multline*}      

Theorem  \ref{proprelb} shows that the best value of $k$
in \eqref{ric} for a congruence modular variety $\mathcal V$ 
is not determined by   
 the Day modularity level of $\mathcal V$
(and it is not determined by the J{\'o}nsson distributivity level, either,
for congruence distributive varieties).
Indeed, both $\mathcal B$ and $\mathcal N_4$
are $5$-modular, not $4$-modular,
$4$-distributive and not $3$-distributive,  
but the best value for $k$  in \eqref{ric}
is $4$ for $\mathcal B$ and $3$ for $\mathcal N_4$:
just
take $n=2$ in Theorem   \ref{proprelb}. 
In particular,
the variety  $\mathcal N_4$
shows that    \eqref{ric}
for some given $k$ 
does not imply 
$k+1$-modularity. 
\end{remark}

\begin{remark} \labbel{tolmodrel}
Since the identity
\begin{equation}\labbel{ar}     
\alpha( R \circ R) \subseteq (\alpha R)^k 
   \end{equation}
implies \eqref{ric}, then, by the above remark, 
the identity \eqref{ar} 
implies congruence modularity.
This  follows also directly from Lemma \ref{rm}.
As we mentioned in the introduction, it is shown  in \cite{ricm}
that the converse holds, that is, every congruence modular variety 
satisfies \eqref{ar}, 
 for some $k$.
Again, this argument from \cite{ricm}
relies heavily on \cite{adjt}.

As in Remark \ref{tolmod}, we get that
Theorem  \ref{proprelb}
shows that the best value of $k$
in \eqref{ar} for a variety $\mathcal V$ 
is not determined by   
 the Day modularity level of $\mathcal V$.
It is likely that there is some variety $\mathcal V$
such that the best value of $k$ in  
\eqref{ric} 
is strictly smaller than the best value of $k$
in \eqref{ar}.
The varieties considered here do not 
furnish a  (counter-)example for this possible inequality.
 \end{remark}

\section{Near-unanimity and edge terms} \labbel{nearu}

Of course, it is an interesting problem to evaluate 
the distributivity spectra of other congruence distributive varieties. 
Varieties with a near-unanimity term appear to be 
a significant test case.
Recall that
$R$, $S$, $T, \dots $ denote reflexive and admissible relations,
  $\Theta$ denotes a tolerance
and $ \overline{R \cup S} $ denotes the smallest  
 admissible relation containing both 
$R  $ and $  S$.

\begin{proposition} \labbel{sch}
Suppose that $m \geq 1$ 
and that the variety $\mathcal V$ has an $m+2$-ary near-unanimity term.
Then, for every $n \geq 2$, the variety  $\mathcal V$ satisfies
\begin{align}
\labbel {15} 
\alpha  (\beta \circ_n \gamma  )
&\subseteq 
\alpha  \beta   \circ _{mn} \alpha  \gamma      && \text{for $n$ even,}   
\\
\labbel {16}
\alpha  (\beta \circ_n \gamma  )
&\subseteq 
\alpha \beta   \circ _{1+m(n-1)} \alpha  \gamma      && \text{for $n$ odd,}   
\\
\labbel{17}
 \Theta   (R \circ_n R) &\subseteq  ( \Theta   R )^{1+m(n-1)}   
 && \text{for every $n$.}
\end{align}    

 Taking $n=2$  in equation \eqref{15}
we get that $\mathcal V$ is $2m$-distributive \cite{M}. 
Taking
$n=3$  in equation \eqref{16} with $ \alpha \gamma$ 
in place of $\gamma$,
we get that  
$\mathcal V$ 
is $2m+1$-modular. 
\end{proposition}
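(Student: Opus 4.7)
The plan is to mimic the proof of Theorem~\ref{bds} using the $(m+2)$-ary near-unanimity term $u$ in place of Baker's $b(x,y,z)=x(y+z)$. Given $(a,d)\in\alpha(\beta\circ_n\gamma)$ witnessed by $a=c_0\,\beta\,c_1\,\gamma\,c_2\,\dotsb\,c_n=d$ with $a\mathrel{\alpha}d$, I introduce for each $k\in\{1,\dots,m\}$ and $j\in\{0,1,\dots,n\}$ the \emph{$k$-th pass} element
\[
P_k(j)=u(\underbrace{a,\dots,a}_{k},\,c_j,\,\underbrace{d,\dots,d}_{m+1-k}),
\]
the direct analogue of the elements appearing in the chain \eqref{B}. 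Near-unanimity delivers the identities $P_m(0)=a$, $P_1(n)=d$, and, most usefully,
\[
P_k(n)=P_{k-1}(0)=u(\underbrace{a,\dots,a}_{k},\underbrace{d,\dots,d}_{m+2-k}),
\]
so the passes $P_m,P_{m-1},\dots,P_1$ splice together into one chain from $a$ to $d$. Within pass $k$, adjacent elements differ only in the $c$-slot, so admissibility of $\beta$ (resp.\ $\gamma$) yields a $\beta$-step (resp.\ $\gamma$-step) matching the original $c$-chain; substituting each $d$ in the argument tuple of $P_k(j)$ by $a$ (via $a\mathrel{\alpha}d$ and $\alpha$-admissibility) produces $u(a,\dots,a,c_j,a,\dots,a)=a$ by near-unanimity, so every within-pass step is $\alpha\beta$- or $\alpha\gamma$-related.

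Concatenating the $m$ passes gives an $mn$-step alternating chain, which is \eqref{15} for $n$ even. For $n$ odd each pass begins and ends with an $\alpha\beta$-step, and the two $\alpha\beta$-steps flanking each of the $m-1$ shared endpoints $P_k(n)=P_{k-1}(0)$ collapse into one (since $\alpha\beta$ is a congruence, hence transitive), saving $m-1$ steps and yielding \eqref{16}. For the tolerance identity \eqref{17} the same chain is used but the collapsing trick is replaced by a \emph{direct jump} from $P_k(n-1)$ to $P_{k-1}(1)$ at each of the $m-1$ pass boundaries: these two tuples differ only in positions $k$ and $k+1$, where the pairs $(a,c_1)$ and $(c_{n-1},d)$ are $R$-related (as $c_0\mathrel R c_1$ and $c_{n-1}\mathrel R c_n$), so admissibility of $R$ delivers the $R$-part of the jump and, combined with the $m-1$ savings, produces a chain of length $mn-(m-1)=1+m(n-1)$.

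The expected main obstacle is the $\Theta$-verification in \eqref{17}. The hypothesis supplies only $a\mathrel{\Theta}d$, not $c_j\mathrel{\Theta}c_{j+1}$ or $a\mathrel{\Theta}c_1$, so naive $\Theta$-admissibility of $u$ applied at a differing $c$-slot fails both within a pass and across a jump. My plan is to re-express each $P_k(j)$ by exploiting the near-unanimity identity $u(x,a,\dots,a)=a$, which plays the role of the lattice absorption used in the proof of Proposition~\ref{moregen} for $\mathcal N_4$: by introducing suitable redundant $a$- or $d$-entries one should be able to shift the component discrepancies between adjacent chain elements into $a\leftrightarrow d$ slots on which $\Theta$-admissibility via $a\mathrel{\Theta}d$ does succeed. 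Carrying out this combinatorial bookkeeping uniformly---within each pass and at each pass boundary---is the heart of the argument.
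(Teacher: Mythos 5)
Your chain $P_k(j)$, the splicing identity $P_k(n)=P_{k-1}(0)$, the congruence-step verification via near-unanimity, the boundary jump $P_k(n-1)\to P_{k-1}(1)$, and the resulting step counts all reproduce the paper's proof (of the slightly more general Proposition~\ref{schbis}), so the approach is essentially the same. The one non-routine piece you defer --- checking that any two elements of the chain are $\Theta$-related --- is precisely where the paper spends its effort, closing it with the Cz\'edli--Horv\'ath nested-substitution trick: rewrite $u(a,\dots,a,c_j,d,\dots,d)$ by replacing each outer $a$-argument with the near-unanimity expansion $u(a,\dots,a,c_k,a,\dots,a)=a$ and each outer $d$-argument with $u(d,\dots,d,c_k,d,\dots,d)=d$, where $c_k$ sits at the target position; then swap $a\leftrightarrow d$ entries inside the inner copies of $u$ using $a\mathrel{\Theta}d$, so that every inner $u$ becomes $u(a,\dots,a,c_k,d,\dots,d)$; finally apply near-unanimity to the outer $u$ (whose only non-matching slot carries $c_j$) to conclude that the whole expression equals $u(a,\dots,a,c_k,d,\dots,d)$. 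This is exactly your ``introduce redundant entries and shift the discrepancies into $a\leftrightarrow d$ slots'' plan carried out; just note that it requires the full near-unanimity hypothesis (dissenter in an arbitrary position), not merely the single instance $u(x,a,\dots,a)=a$ that you cite.
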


In fact, we shall prove a more general result.

\begin{proposition} \labbel{schbis}   
Under the assumptions in Proposition \ref{sch},  $\mathcal V$ satisfies
\begin{equation}\labbel{14}    
\begin{aligned}
 \Theta  (R_1 \circ  R_2 \circ \dotsc \circ R_n)  \subseteq 
\Theta  R_1 \circ \Theta  & R_2 \circ \dotsc \circ \Theta  R _{n-1} \circ
\Theta  (\overline{R_n  \cup R_1}) 
{\, \circ  \ }  
\\
  \Theta  & R_2 \circ \dotsc \circ \Theta  R _{n-1}  \circ
\Theta  (\overline{R_n  \cup R_1}) 
{\, \circ  \ }  
\\
\dotsc  \Theta  & R_2 \circ \dotsc \circ \Theta  R _{n-1} \circ
\Theta  R_n,
\end{aligned}
 \end{equation}   
with $m$ lines, that is, with a total of  $ 1 + m (n- 1)  $ factors
(a total of $  m (n- 1)  $ occurrences of $\circ$)
 on the right-hand side.
 \end{proposition}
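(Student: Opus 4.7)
The plan is to generalize the Baker-style chain of Proposition \ref{moregen} from the lattice-based setting to an abstract $(m+2)$-ary near-unanimity term $t$, producing an $m$-row ``snake'' of intermediate elements that witnesses \eqref{14}. Given $(a,d) \in \Theta(R_1 \circ \cdots \circ R_n)$ with witnesses $a = c_0 \mathrel{R_1} c_1 \mathrel{R_2} \cdots \mathrel{R_n} c_n = d$ and $a \mathrel\Theta d$, for each row index $j \in \{1, \ldots, m\}$ and column $i \in \{0, 1, \ldots, n\}$ I would define $e_i^{(j)}$ as $t$ applied to a specific $(m+2)$-tuple drawn from $\{c_0, c_i, c_n\}$, with the distribution of $c_0$'s and $c_n$'s depending on $j$. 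The tuples are arranged so that the near-unanimity identities collapse the corner elements to $e_0^{(1)} = a$ and $e_n^{(m)} = d$, while the interior elements remain generic.

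To verify the horizontal transitions $(e_i^{(j)}, e_{i+1}^{(j)}) \in \Theta R_{i+1}$, I would use admissibility of $R_{i+1}$ for the $R_{i+1}$-part (only the coordinate holding $c_i$ is replaced by $c_{i+1}$, and $c_i \mathrel{R_{i+1}} c_{i+1}$), and alternative rewritings for the $\Theta$-part. The key observation is that the near-unanimity identities provide $m+1$ ``absorption laws'' $t(x, \dots, y, \dots, x) = x$, which allow me to swap $c_0$ for $c_n$ (or vice versa) in coordinates where the remaining $m+1$ entries share a common value, obtaining an alternate expression for $e_i^{(j)}$ whose tuple matches the tuple defining $e_{i+1}^{(j)}$ everywhere except in positions whose pair is $(c_0, c_n) = (a,d) \in \Theta$.

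The vertical bridge $(e_{n-1}^{(j)}, e_1^{(j+1)}) \in \Theta \cap \overline{R_n \cup R_1}$ between consecutive rows is realized by expressing both endpoints as $t$ applied to a common $(m+2)$-tuple template in which precisely two coordinates differ: one coordinate traverses the pair $(c_{n-1}, c_n) \in R_n$ and the other traverses $(c_0, c_1) \in R_1$, with all remaining coordinates reflexive. This single application of $t$ to pairs drawn from $R_n \cup R_1$ places the pair $(e_{n-1}^{(j)}, e_1^{(j+1)})$ in $\overline{R_n \cup R_1}$, and the $\Theta$-part once more follows from a near-unanimity rewriting parallel to the horizontal step. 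Concatenating the horizontal transitions and the $m-1$ vertical bridges, flanked by the initial $a \mathrel{(\Theta R_1)} e_1^{(1)}$ and final $e_{n-1}^{(m)} \mathrel{(\Theta R_n)} d$, yields a chain of length $1 + m(n-1)$ conforming to \eqref{14}.

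The main obstacle is the coordinated design of the tuples defining $e_i^{(j)}$ so that all three kinds of transitions --- horizontal, vertical bridge, and boundary --- can be simultaneously verified. In Proposition \ref{moregen} this orchestration was executed via specific lattice absorption identities such as $x \cdot (x + y) = x$; for abstract near-unanimity one must substitute the $m+1$ identities $t(x, \dots, y, \dots, x) = x$, and identifying which of these applies at each position --- while insisting that every bridge step be a \emph{single} term application rather than a composition --- is the central technical difficulty. The count of usable absorption identities is precisely $m+1$, which is exactly what forces the $m$-row structure of the right-hand side of \eqref{14}.
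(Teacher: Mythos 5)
Your chain is exactly the paper's: $e_i^{(j)}$ is $t$ applied to $(m+1-j)$ copies of $a$, then $c_i$, then $j$ copies of $d$, with the $c$-coordinate sliding one slot per row; the corner collapses $e_0^{(1)}=a$, $e_n^{(m)}=d$ follow by near-unanimity; and the diagonal bridge $e_{n-1}^{(j)}\to e_1^{(j+1)}$ is a single admissibility application of $\overline{R_n\cup R_1}$ in which one coordinate traverses $(c_{n-1},c_n)\in R_n$ and another traverses $(c_0,c_1)\in R_1$. The $R$-side of each transition is verified by coordinate-wise admissibility, as you describe.

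The gap is in the $\Theta$-side, which is in fact the whole technical content of the proposition. As you state it, you would rewrite the tuple defining $e_i^{(j)}$ via absorption so that it matches the tuple defining $e_{i+1}^{(j)}$ everywhere except in $(a,d)$-slots, and then invoke admissibility of $\Theta$. But those two tuples differ in the slot carrying $c_i$ versus $c_{i+1}$, and $c_i,c_{i+1}$ are arbitrary chain elements: they are not $\Theta$-related, and no absorption identity can substitute $c_{i+1}$ for $c_i$ in a tuple. So no single-level rewriting produces a tuple matching $e_{i+1}^{(j)}$'s modulo $(a,d)$-swaps. The paper closes this by a nesting trick (credited to Cz\'edli and Horv\'ath): each coordinate of $e_i^{(j)}$'s tuple \emph{except} the one holding $c_i$ is first rewritten as an inner $t$-application to a tuple in $\{a,c_{i+1},d\}$ that still evaluates to $a$ or $d$ by absorption; then the $a$'s and $d$'s inside all the inner tuples are swapped simultaneously in one $\Theta$-admissibility step, after which each inner $t$ evaluates to $e_{i+1}^{(j)}$ itself, so the outer $t$ absorbs the surviving $c_i$ and returns $e_{i+1}^{(j)}$. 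This one computation in fact shows that \emph{all} chain elements are pairwise $\Theta$-related, which also covers the bridge. Without the nesting the $\Theta$-membership of every horizontal transition and every bridge is unjustified. A minor point: an $(m+2)$-ary near-unanimity term yields $m+2$ absorption identities, not $m+1$; the $m$-row count comes from the $m$ interior positions through which the $c$-coordinate can slide, the two end positions being reserved for the boundary collapses.
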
 

Equation \eqref{14}
should read
$\Theta  (R_1  \circ \dotsc \circ R_n)  \subseteq 
\Theta  R_1 \circ \Theta   R_2 \circ \dotsc \circ \Theta  R _{n-1} \circ
\Theta  R_n$
when $m=1$ and  
$\Theta  (R_1 \circ \dotsc \circ R_n)  \subseteq 
\Theta  R_1  \circ \dotsc \circ \Theta  R _{n-1} \circ
\Theta  (\overline{R_n  \cup R_1}) \circ \ 
 \Theta   R_2 \circ \dotsc  \circ
\Theta  R_n$
  when $m=2$.  

\begin{proof}[Proofs] 
Suppose that
$u$ is an $m+2$-ary near-unanimity term, 
$a \mathrel { \Theta  } d $
and
$a \mathrel { R_1 } c_1 \mathrel { R_ 2} c_2 \dots c_{n-2} 
 \mathrel { R_{n-1} } c_{n-1} \mathrel { R_ n} d $.
Then  
\begin{multline*} 
a= u(a, \dots, a, a, a, d) \mathrel { R_1}  u(a, \dots, a, a, c_1, d)
\mathrel { R_2}  u(a, \dots, a, a, c_2, d) \dots 
\\
\mathrel { R_{n-1}}  u(a, \dots, a, a, c_{n-1}, d)
 \mathrel { \overline{R_n  \cup R_1}}  u(a, \dots, a, c_1 , d, d)
\mathrel { R_2} u(a, \dots, a, c_2, d, d) 
\\
\dots
\\
\mathrel { R_{n-1}}  u(a, a, c_{n-1}, d, \dots, d)
 \mathrel { \overline{R_n  \cup R_1}}  u(a, c_1 , d, d, \dots, d)
\mathrel { R_2}  u(a, c_2 , d, d, \dots, d) 
\\
\dots
\mathrel { R_{n-1}}  u(a, c_{n-1} , d, d,  \dots, d) 
\mathrel { R_n } u(a, d , d, d, \dots, d) =d.
\end{multline*}    

In the above chain of relations we  have only used
a minimal part of 
the assumption that $u$ is a near-unanimity term: we have used 
 only the two special cases in which 
the ``dissenter'' is either the first or the last element. 
The full assumption will be used in order  to show 
that all the above elements are $\Theta$-related.
Were $\Theta = \alpha $ a congruence, this would be trivial, since
\begin{multline*}
u(a,\dots, a, c_j, d, \dots, d) \mathrel \alpha   u(a,\dots, a, c_j, a, \dots, a) =a =
\\ 
u(a,\dots, a, c_k, a, \dots, a) \mathrel \alpha  u(a,\dots, a, c_k, d, \dots, d) ,
 \end{multline*}   
 for all pairs of indices $j$ and $k$ and 
where the $c_j$'s and 
the $c_k$'s can occur in any pair of possibly distinct positions.

Notice that the case in which $\Theta$ 
is a congruence is enough in order to prove 
\eqref{15} and
 \eqref{16} in Proposition \ref{sch}.
Formally, \eqref{15} does not follow from
\eqref{14}; however in \eqref{14}
we can replace each occurrence of $ \Theta  (\overline{R_n  \cup R_1})$
by $ \Theta   R_n \circ \Theta  R_1$.
Indeed, say,   
$ u(a, \dots, a, a, c_{n-1}, d)
 \mathrel { \Theta   R_n}
 u(a, \dots, a, a, d, d)
 \mathrel { \Theta  R_1  }  u(a, \dots, a, c_1 , d, d)$. 

It remains to consider the case in which 
$\Theta$ is only supposed to be a tolerance.
The argument resembles a proof in 
Cz\'edli and Horv\'ath
\cite{CH}. 
As above, we shall show that any two  elements in the above chain 
(disregarding their ordering, that is, slightly more than required)
are 
$\Theta $-related.
Indeed, 
 \par
\vskip-10pt
{\par \small
\begin{align*}
&\quad u(a, \dots, a, c_j, d, \dots, d)=
\\
&u(u(\dots a, c_k, a \dots ), \dots
 u( \dots a, c_k, a \dots ),
 c_j, 
 u( \dots d, c_k, d \dots ),
 \dots u( \dots d, c_k, d \dots )) \mathrel { \Theta  }
\\[-5pt]
&u(u(\dots a, c_k, \overset{|}{d} \dots), \dots
u( \dots a, c_k, \overset{|}{d} \dots),
 c_j,
 u( \dots  \overset{\raisebox{2pt}{\scriptsize$|$}} {a}, c_k,
{ d} \dots ), \dots 
u( \dots \overset{\raisebox{2pt}{\scriptsize$|$}} a, c_k, {d} \dots )) =
\\
& \quad u(a, \dots, a, c_k, d, \dots, d),
 \end{align*}   
\par}
\noindent
again, for all pairs of indices $j$ and $k$ and
where the $c_j$'s and 
the $c_k$'s can occur in any pair of possibly distinct positions.
In the above-displayed formula
 the vertical bars link distinct  $\Theta$-related elements 
and, in order to keep the formula within a reasonable length,
we have written, say,
$u(\dots a, c_k, d \dots)$
in place of
$u(a,a,\dots,a, a, c_k, d, d, \dots, d,d)$.      
In conclusion, we get that 
$(a,d)$ belongs to the right-hand side of \eqref{14}. 
\end{proof}  

Equations \eqref{15} and
 \eqref{16}  show that a variety with a near-unanimity  term is congruence 
distributive, a result  originally due to Mitschke \cite{M}.
The above proof seems simpler than the one from \cite{M}
 and uses folklore ideas. 
Cf., e.~g., 
 Kaarli and Pixley \cite[Lemma 1.2.12]{KP},
whose proof is credited to E.\ Fried.
Notice that here and in \cite[Lemma 1.2.12]{KP}, as well,  
it is not necessary to use J{\'o}nsson's
characterization \cite{J} of congruence distributive varieties.  

 From a more recent point of view,
\eqref{14}  might be seen as a combination
of two observations.
First, the fact
that 
a near-unanimity term
easily  yields a set of
directed J{\'o}nsson terms;
see, e.~g., Barto and Kozik \cite[Section 5.3.1]{BK}.
Second, the observation in \cite{jds}  that directed J{\'o}nsson terms
not only imply congruence distributivity, but  also imply
certain similar relation identities.
The technical idea of merging $R_n$ and $R_1$,
so as to obtain a smaller number of factors in \eqref{14} and
\eqref{17} 
 seems new, at least  in the present context.    

We do not know whether we can replace 
the tolerance $\Theta$ by an admissible relation $T$ 
in \eqref{14} and  \eqref{17} 
(with or without the same number of factors on the right).
Apart from this, the variety 
of lattices shows that
the results in Proposition \ref{sch}
are best possible when $m=1$. 
Since $u$ 
is a $4$-ary near-unanimity term in  $\mathcal N_4$, then
Theorems \ref{bds},  \ref{pari} and
\ref{proprelb} show that the values of the indices
on the right-hand sides of 
  Proposition \ref{sch}  are best possible when 
$m=2$.

We now turn to edge terms,
 an 
important generalization of near-unanimity terms.
Berman, Idziak, Markovi\'c, 
  McKenzie,  Valeriote and Willard \cite{bimm}
have introduced edge terms in \cite{bimm}, 
providing equivalent characterizations for their existence.
Further characterizations have been found by
Kearnes and Szendrei in \cite{KS}.

If $k \geq 2$, a $k+1$-ary term $t$ is a 
\emph{$k$-edge term} 
for some variety $\mathcal V$ 
if the equations 
$x=t(y,y,x,x, \dots, x)=t(x,y,y,x, \dots, x)$ hold 
and, moreover, 
all the equations of the form 
$x=t(x,x,x, \dots, y, \dots  )$
hold in $\mathcal V$, where a single occurrence of $y$ appears 
in any place after the third place, surrounded by
$x$'s elsewhere.
 We have used here the formulation 
from \cite{KS} in which  the first two places
are exchanged. 
Notice that a $k$-ary  near-unanimity term 
becomes a $k$-edge term by adding a dumb
variable at the second place.
The following proposition 
provides, among other, still another proof that varieties with an edge term
are congruence modular.

\begin{proposition} \labbel{edge} 
If $k \geq 3$ and  $\mathcal V$ has a 
$k$-edge term, then $\mathcal V$ satisfies
\begin{align} \labbel{e1}   
\Theta (R \circ R) &\subseteq (\Theta R )^{k-1} 
\qquad\qquad \text{ and, more generally,}  
\\
\labbel{e2}
\Theta (R \circ S) &\subseteq  
\Theta R \circ  (\Theta (\overline{R \cup S} ) )^{k-2}. 
 \end{align} 
Thus $\mathcal V$ is $2k-3$-modular by Lemma \ref{rm}(3).
\end{proposition}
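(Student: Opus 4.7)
My plan is to reduce to the case of \eqref{e2}, since \eqref{e1} follows on specialising $S = R$ (whence $\overline{R \cup S} = R$), and the $2k-3$-modularity is then immediate from Lemma~\ref{rm}(3) applied with $k-1$ in place of $k$. Let $u$ be a $(k+1)$-ary $k$-edge term for $\mathcal V$, so that $u$ satisfies the two edge identities $u(y,y,x,\dots,x) = u(x,y,y,x,\dots,x) = x$ together with the $k-2$ single-dissenter identities $u(x,\dots,x,y,x,\dots,x) = x$ for $y$ in any position $\ell$ with $4 \le \ell \le k+1$. Given $(a,d) \in \Theta(R \circ S)$, pick $c$ with $a \mathrel R c \mathrel S d$ and $a \mathrel \Theta d$, and introduce the chain $q_0 = a,\ q_1,\dots,\ q_{k-1} = d$ defined by
\[
 q_i = u(\underbrace{d,\dots,d}_{i+1},\, c,\, \underbrace{a,\dots,a}_{k-1-i}) \qquad (1 \le i \le k-2),
\]
the element $c$ occupying position $i+2$ in $q_i$; the endpoints admit the crucial alternative representations $q_0 = u(d,d,a,\dots,a)$ by the first edge identity and $q_{k-1} = u(d,d,\dots,d,c)$ by the last single-dissenter identity.

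The step $q_0 \leadsto q_1$ alters only coordinate $3$, replacing $a$ by $c$, and is thus witnessed by $R$; each subsequent step $q_i \leadsto q_{i+1}$ with $1 \le i \le k-2$ alters coordinate $i+2$ from $c$ to $d$ (via $S$) and coordinate $i+3$ from $a$ to $c$ (via $R$), hence is witnessed coordinate-wise by $\overline{R \cup S}$. Taken together, the $k-1$ transitions realise the right-hand shape $\Theta R \circ (\Theta(\overline{R \cup S}))^{k-2}$ on the relational side.

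The real obstacle is the $\Theta$-part: the relations $q_i \mathrel \Theta q_{i+1}$ cannot be settled by bare admissibility, because the coordinates that change involve $c$, which need not be $\Theta$-related to either $a$ or $d$. To overcome this, I would adapt the nested-rewriting device from the proof of Proposition~\ref{sch}: rewrite each chain element as $u(e_1,\dots,e_{k+1})$ where the inner $u$-terms $e_j$ evaluate, by the edge identities at positions $1,2,3$ and the single-dissenter identities at positions $\ge 4$, to the correct coordinates of $q_i$; then flip selected occurrences of $a$ and $d$ inside the $e_j$ using $a \mathrel \Theta d$ together with admissibility of $\Theta$, so that the resulting expression evaluates, via the same identities, to $q_{i+1}$. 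The delicate point is to place the probe $c$ and the swap positions inside the inner $u$-terms consistently, so that both the pre- and post-flip expressions simplify as intended; the two edge identities supply the required flexibility at the first three positions, while the single-dissenter identities handle the remaining positions $4,\dots,k+1$. Once these $k-1$ $\Theta$-relations are in hand, \eqref{e2} follows, and with it the whole proposition via Lemma~\ref{rm}(3).
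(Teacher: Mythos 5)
Your reduction of \eqref{e1} to \eqref{e2} (specialise $S=R$, so $\overline{R\cup S}=R$) and the $2k-3$-modularity conclusion via Lemma~\ref{rm}(3) are exactly what the paper does. Your chain of witnesses is also the right kind of object, and the relational bookkeeping is sound: $q_0\to q_1$ changes one coordinate from $a$ to $c$ (an $R$-step), and each $q_i\to q_{i+1}$ changes coordinate $i+2$ from $c$ to $d$ and coordinate $i+3$ from $a$ to $c$ (an $\overline{R\cup S}$-step), giving $\Theta R\circ(\Theta(\overline{R\cup S}))^{k-2}$ in shape.

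The gap is the $\Theta$-part, which you flag as ``delicate'' but treat as bookkeeping. It is not: your chain is incompatible with the nested-rewriting device. Concretely, $q_1=u(d,d,c,a,\dots,a)$ has the probe $c$ alone at coordinate $3$. The $k$-edge identities only absorb a dissenting \emph{pair} at coordinates $1,2$ or $2,3$, or a single dissenter at a coordinate $\ge 4$; there is no identity of the form $u(x,x,y,x,\dots,x)=x$ with $y$ alone at coordinate $3$. Consequently the outer frame you would need for the rewriting step — $u(e_1,e_2,c,e_4,\dots,e_{k+1})$ with $c$ fixed at coordinate $3$ and the $e_j$'s nested terms evaluating to $a$'s and $d$'s — can never be made to collapse to $a=q_0$, nor to $q_2$: once all $e_j$ are flipped to a common value $W$, the expression $u(W,W,c,W,\dots,W)$ is not pinned down by any identity. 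So $q_0\mathrel\Theta q_1$ and $q_1\mathrel\Theta q_2$ are left unproved, and no simple adjustment of the inner flips fixes this (the only elements $\Theta$-related to $c$ that one controls are $c$ itself, so coordinate $3$ is stuck).

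Note that the paper's chain is \emph{not} the mirror image of yours. It moves $c$ from coordinate $k$ down through coordinates $\ge 4$, then jumps directly to the pair $\{2,3\}$ and then to $\{1,2\}$, so that a lone $c$ never sits at coordinate $1$, $2$ or $3$; each element is then reachable by the edge identities from $a$ or $d$, and the nested-rewriting step goes through. Because the two-coordinate edge identities live only at the low-index end of the term, the arity asymmetry of a $k$-edge term rules out a reflected chain of the form you propose; one has to re-use the paper's asymmetric design (or, equivalently, pass through a pair of $c$'s at $\{2,3\}$ rather than a lone $c$ at $3$).
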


 \begin{proof}
The proof is similar to the proof of 
Proposition \ref{sch}, with  a variation
``near the edge''.
Equation \eqref{e1} is the particular case
$R= S$ of equation \eqref{e2}, hence it is enough to prove the
latter.    
If $a \mathrel { \Theta} d$
and $a \mathrel {R } c \mathrel { S } d $,
then
\begin{align*} \labbel{}
a= t&(a,a,a,a,a,a, \dots, a,a,a,d) \mathrel { R}  
\\[-6pt]
t&(a,a,a,a,a,a, \dots, a,a,
\overset{\raisebox{2pt}{\scriptsize$|$}}{c},
d)  \mathrel {\overline{R \cup S}}
\\[-7pt]
t&(a,a,a,a,a,a, \dots, a,
\overset{\raisebox{3pt}{\scriptsize$|$}}{c},
\overset{|}{d},d)  \mathrel {\overline{R \cup S}} \ \dots
\\
\dots \  t&(a,a,a,a,c,d, \dots, d,d,d,d)  \mathrel {\overline{R \cup S}} 
\\[-7pt]
t&(a,a,a,
\overset{\raisebox{3pt}{\scriptsize$|$}}{c},
\overset{|}{d},
d, \dots, d,d,d,d)  \mathrel {\overline{R \cup S}} 
\\[-7pt]
 t&(a,\hspace{1pt}\overset{\raisebox{3pt}{\scriptsize$|$}}{c},
\hspace{1pt}\overset{\raisebox{3pt}{\scriptsize$|$}}{c},
\overset{|}{d},
d,d, \dots, d,d,d,d)  \mathrel {\overline{R \cup S}}
\\[-7pt]
 t&(\hspace{1pt}\overset{\raisebox{3pt}{\scriptsize$|$}}{c},
\hspace{1pt} c,
\overset{|}{d},
d,d,d, \dots, d,d,d,d)  = d 
 \end{align*}   

The proof that all the above elements are $\Theta$-related is
similar to the corresponding proof in \ref{sch}.
For example,   

\begin{align*} \labbel{}
 t(a,c,c,d ...) =   t&(t(a,a,a,c,a...), c,c, t(d,d,d,c,d...) ...) \mathrel { \Theta }  
\\[-6pt]
 t&(t(a,a,a,c,{\overset{|}{d}}...), c,c, 
 t(\overset{\raisebox{3pt}{\scriptsize$|$}}{a},
\overset{\raisebox{3pt}{\scriptsize$|$}}{a},
\overset{\raisebox{3pt}{\scriptsize$|$}}{a},
c,d...)...) = 
t(a,a,a,c,{d}...). \qedhere
\end{align*}    
\end{proof}

\begin{remark} \labbel{gg}   
Merging the proofs of Propositions
\ref{schbis} and  \ref{edge}
we get that 
if  $k \geq 4$ 
and   $\mathcal V$ has a $k$-edge term,
then, for every $n \geq 2$, the variety  $\mathcal V$ satisfies
\begin{equation*}\labbel{14e}    
\begin{aligned}
 \Theta (R \circ S) (R_1 \circ \dotsc \circ R_n)  \subseteq 
\Theta  R_1 \circ \Theta  & R_2 \circ \dotsc \circ \Theta  R _{n-1} \circ
\Theta  (\overline{R_n  \cup R_1}) 
{\, \circ  \ }  
\\
  \Theta  & R_2 \circ \dotsc \circ \Theta  R _{n-1}  \circ
\Theta  (\overline{R_n  \cup R_1}) 
{\, \circ  \ }  
\\
\dotsc  \Theta  & R_2 \circ \dotsc \circ \Theta  R _{n-1} \circ
\Theta  (\overline{R_n  \cup R}) \circ 
\Theta  (\overline{R  \cup S}),
\end{aligned}
 \end{equation*}   
with $k-3$ lines.
\end{remark}

\section{Further remarks} \labbel{fur} 

Recall that $\alpha$, $\beta$, $\gamma,\dots$
denote congruences, $\Theta$ denotes a tolerance  and 
$R$, $S$, $T, \dots $ denote reflexive and admissible relations.

\begin{remark} \labbel{fact}   
It is standard and easy to
show that varieties with a majority term satisfy
$ T( R \circ S) \subseteq TR \circ TS$. 
See, e.\ g., 
 \cite{jds,uar}. Since the composition of two admissible
relations is still admissible, then, by 
substitution and 
an easy induction, we get that if  $\mathcal V$ has 
a majority term, then, for every $n \geq 2$, the variety  $\mathcal V$ satisfies
$ T  (R_1 \circ  R_2 \circ \dotsc \circ R_n)  \subseteq 
T  R_1 \circ T   R_2 \circ \dotsc \circ T  R _{n}$.

Moreover, by taking 
$T = (R_3 \circ R_2)(R_1 \circ R_3)$, we get that a variety 
with a majority term satisfies
$(R_1 \circ R_2)(R_3 \circ R_2)(R_1 \circ R_3)
\subseteq R_1R_3 \circ R_1R_2 \circ R_1R_2\circ R_2R_3$.
\end{remark}  

\begin{remark} \labbel{majari}
In passing, we notice the curious fact that
while, of course, 
the identity 
$\alpha( \beta \circ \gamma )= \alpha \beta \circ \alpha \gamma $ 
for congruences
is equivalent to the existence of a majority term,
on the other hand, the identity
\begin{equation}\labbel{arit}     
(\alpha \circ \delta )( \beta \circ \gamma )=
 \alpha \beta \circ \alpha \gamma \circ \delta \beta \circ \delta \gamma 
  \end{equation}
is equivalent to arithmeticity.
Notice that we are assuming equality, not just inclusion.

To prove the claim, assume
equation \eqref{arit} and expand the product in two ways, getting
$\alpha \beta \circ \alpha \gamma \circ \delta \beta \circ \delta \gamma =
\alpha \beta \circ  \delta \beta \circ \alpha \gamma  \circ \delta \gamma $.
Then taking $\gamma = \alpha $ and $\delta= \beta $    
we get
$ \alpha \circ \beta = \alpha \beta \circ \alpha \circ  \beta \circ \alpha \beta  =
\alpha \beta \circ \beta \circ \alpha   \circ \alpha \beta = \beta \circ \alpha $, 
that is, congruence permutability.
On the other hand, by taking 
$\beta = \gamma $ in \eqref{arit},
we get $2$-distributivity, that is, a majority term.   
It is well-known that arithmeticity 
is equivalent to congruence permutability
together with the existence of a majority term, hence our claim follows.
 \end{remark}

\begin{remark} \labbel{fv3}
(a)
It follows from \cite{J} 
and is by now standard that, for every $m$,  some variety $\mathcal {V}$ 
satisfies the congruence identity 
\begin{equation}\labbel{md}
    \alpha( \beta \circ \gamma ) \subseteq \alpha \beta \circ_m \alpha \gamma
 \end{equation}
if and only if \eqref{md}  holds in $\mathbf F _{ \mathcal V } ( 3 ) $,
the free algebra in $\mathcal {V}$ generated by three elements.
Actually, what is relevant in the above sentence 
is the left-hand side of 
\eqref{md}; the  sentence is true whenever we replace the
right-hand side of \eqref{md} with any expression
in function of $\alpha$, $\beta$, $\gamma$ 
constructed by using intersection, composition
and even transitive closure.

Henceforth, a possible way to check  
whether some variety $\mathcal {V}$ satisfies 
\eqref{md}, or even  many  related identities,
 is to check \eqref{md} in $\mathbf F _{ \mathcal V } ( 3 ) $.
Actually, if $x$, $y$ and  $z$ are the generators
 of $\mathbf F _{ \mathcal V } ( 3 ) $,
it is enough to check \eqref{md}
in the special case when $\alpha$, $\beta$ and $\gamma$   
are the congruences generated, respectively,
 by the pairs $(x,z)$,  $(x,y)$, and $(y,z)$.
This is  classical, by now. 
As we shall mention in (e) below,
  the above procedure is not the simplest way to check
\eqref{md}, or to check congruence distributivity; however, it is
the one relevant to the following discussion.

(b)
Let us compute, for example,
$\mathbf F _{ \mathcal B^d} ( 3 ) $.
Since $\mathbf F _{ \mathcal B^d} ( 3 ) $
is naturally embedded into $\mathbf F _{ \mathcal D } ( 3 ) $,
where $\mathcal {D}$ is the variety of distributive lattices,
 it is easy to see that the elements of $\mathbf F _{ \mathcal B^d} ( 3 ) $
are
\begin{equation*}\labbel{fv3b}
x, \qquad x(y+z), \qquad xy, \qquad xyz,
  \end{equation*}       
together with the elements arising from 
all the possible permutations of $x$, $y$ and $z$. 
Cf.\ \cite{B}.
The elements in the above list are exactly also the elements
of $\mathbf F _{ \mathcal N_4 ^d } ( 3 ) $, since 
 $ F _{ \mathcal B^d} ( 3 ) $ is closed under 
$u$. Indeed, if 
$a \in  F _{ \mathcal B^d} ( 3 ) $,
then, by the above description,  either $a \leq x$,
or  $a \leq y$, or $a \leq z$.
Hence if
$a_1, a_2, a_3, a_4 \in  F _{ \mathcal B^d} ( 3 ) $,
then at least two elements are less than or equal to some generator, say,
 $a_1, a_2 \leq x$, thus $u(a_1, a_2, a_3, a_4) \leq x$, hence 
 $u(a_1, a_2, a_3, a_4 )\in  F _{ \mathcal B^d} ( 3 ) $.
Of course, the above argument would fail,
were we considering  $\mathbf F _{ \mathcal V } ( 4 ) $
in place of $\mathbf F _{ \mathcal V } ( 3 ) $.
 
In a sense, computing $\mathbf F _{ \mathcal V } ( 3 ) $  
is a way to see that $\mathcal N_4 ^d$ and $\mathcal B^d$
satisfy exactly the same identities of the form 
$ \alpha ( \beta \circ \gamma ) \subseteq something$ for congruences,
for many possible variations on  the expression on the right.  
However, there is a subtle related issue 
we are going to discuss soon.

(c) Consider now relation identities
of the form 
\begin{equation}\labbel{mdr} 
T(R \circ S) \subseteq TR \circ_m TS.
  \end{equation}    
Again, many variations are possible, 
including letting some variable be a congruence or a tolerance.
It is still true that some variety $\mathcal {V}$ 
satisfies \eqref{mdr} if and only if  
\eqref{mdr}  holds in $\mathbf F _{ \mathcal V } ( 3 ) $.

The argument is standard but not very usual.
To prove the non trivial implication, let 
$\mathbf F _{ \mathcal V } ( 3 ) $
be generated by $x$, $y$ and  $z$
and  $R$, $S$ and $T$   
be the smallest reflexive and admissible relations containing, respectively,
 the pairs  $(x,z)$,  $(x,y)$ and $(y,z)$.
Since 
$(x,z) \in T(R \circ S)$, then,
if  \eqref{mdr}  holds, there are elements
$t_0, \dots, t_m \in  F _{ \mathcal V } ( 3 ) $   
witnessing $(x,z) \in TR \circ_m TS$.
Since we are working in $\mathbf F _{ \mathcal V } ( 3 ) $, 
the $t_i$'s correspond to ternary terms of $\mathcal {V}$.
What does it mean, say, that
$t_i \mathrel R t_{i+1}  $?
It is easy to see that 
$R =\{ (w(x,y,z, x), w(x,y,z, z))  \mid
 w \text{ a $4$-ary term of  }  \mathcal {V} \}$. 
Hence 
$t_i (x,y,z) =w_i(x,y,z, x) $
and 
$ w_i(x,y,z, z) = t_{i+1} (x,y,z)$, for some
 $4$-ary term  $w_i$.
Similarly, the $S$- and $T$-relations 
are witnessed by certain other 
$4$-ary terms.
Once we have found  appropriate terms
and appropriate equations, it is standard to see that 
they witness that 
\eqref{mdr} holds in $\mathcal {V}$.
See \cite[Proposition 3.7]{uar} for full details. 

(d) We now see an essential difference between
the observations in   (a) and (c) above.
By (b), we have 
 $F _{ \mathcal N_4 ^d } ( 3 ) =  F _{ \mathcal B^d} ( 3 ) $;
nevertheless, we have seen in Theorem \ref{pari}  that
$\mathcal N_4 ^d  $ and 
 $ \mathcal B^d$
 \emph{do not}
satisfy the same identities of the form 
\eqref{mdr}, in the sense that the best 
possible indices on the right are not the same.
At first, this might generate some
perplexity, but in the end the explanation is easy.
The point is that,
when considering congruence identities of the form
\eqref{md},
only  ternary terms 
  are relevant;
in other words, only the elements of $\mathbf F _{ \mathcal V } ( 3 ) $  
are relevant.
On the other hand, as shown in remark (c)
above, though the validity of 
\eqref{mdr} is checked in $\mathbf F _{ \mathcal V } ( 3 ) $,
the relevant terms, in this case, are the $4$-ary ones.
Thus we have to deal with the algebraic structure of 
$\mathbf F _{ \mathcal V } ( 3 ) $, not simply with the set of its elements.
Notice that, were we considering tolerance identities, rather than
relation identities, we should deal with $5$-ary terms.  

(e) In spite of the considerations in 
(a) above, working in $\mathbf F _{ \mathcal V } ( 3 ) $ 
is not the simplest way in order to check
the validity of some \emph{congruence}
identity of the form \eqref{md}.
Since J{\'o}nsson's equations 
\cite{J} are essentially two-variable equations, 
a variety $\mathcal {V}$ is $m$-distributive 
if and only if  $\mathbf F _{ \mathcal V } ( 2 ) $  
generates an $m$-distributive variety
(warning: it might happen that $\mathbf F _{ \mathcal V } ( 2 ) $  
is congruence distributive, but the variety it \emph{generates} is not!)
In fact, it is enough to check 
$m$-distributivity in an appropriate subalgebra of 
  $(\mathbf F _{ \mathcal V } ( 2 ) )^3$ and,
for finite idempotent algebras, there are even 
computationally more effective methods to check congruence
distributivity.
Cf.\ Freese and Valeriote \cite{FV}.
 \end{remark}

Now consider, in general, a variety $\mathcal {R}$ 
which is obtained by taking polynomial reducts of lattices.
Then $\mathbf F _{ \mathcal R } ( 2 ) $
is a polynomial reduct of $\mathbf F _{ \mathcal L} ( 2 ) $,
where $\mathcal {L}$ is the variety of lattices.     
But $\mathbf F _{ \mathcal L} ( 2 )  = \mathbf F _{ \mathcal D } ( 2 ) $,
where $\mathcal {D}$ is the variety of distributive lattices,
hence the variety generated by 
$\mathbf F _{ \mathcal R } ( 2 ) $
is a polynomial reduct of \emph{distributive}
lattices. This is one of the main arguments
in the proof of \cite[Theorem 2]{B}
(warning: it is not necessarily the case that 
$\mathcal {R}$ itself is a reduct of the variety of distributive lattices:
this applies only to the subvariety of $\mathcal {R}$ generated by $\mathbf F _{ \mathcal R } ( 2 ) $).
Exactly as in \cite{B}, some results provable for $\mathcal B$ extend to \emph{every}
variety which is 
a congruence distributive  polynomial reduct of some variety of lattices.
This is the content of the next
theorem, where we shall also show that 
there are limitations to the counterexamples which
can be furnished by polynomial reducts
of Boolean algebras.

If $P$ is a set of lattice terms and $\mathbf L$ is a lattice, we denote 
by $\mathbf L_P$ 
the algebra with base set $L$ 
and with, as basic operations, those induced by the terms of $P$.
If $\mathcal V$ is a variety of lattices, we let $\mathcal V_P$
be the variety generated by all algebras 
$\mathbf L_P$, with $\mathbf L$ varying in $\mathcal V$.
If $\mathcal W =\mathcal V_P$, for some $P$,
we shall say that $\mathcal W$  is a \emph{polynomial reduct}
of $\mathcal V$. 

\begin{theorem} \labbel{prl}
(1) If the variety $\mathcal W$ is a congruence distributive polynomial reduct of  
some variety of lattices, then $\mathcal W$ satisfies
\begin{align*}   \labbel {9gen}
 \Theta  (R \circ_n S) &\subseteq  \Theta  R  \circ _{2n} \Theta  S    
 && \text{for $n$ even, and}   
\\
 \Theta  (R \circ_n S) &\subseteq  
 (\Theta  R  \circ \Theta  S \circ {\stackrel{n}{\dots}} \circ  \Theta R ) {\circ}
 (\Theta  R  \circ \Theta  S \circ {\stackrel{n}{\dots}} \circ  \Theta R ) 
 && \text{for $n$ odd.}   
 \end{align*}

(2) If the variety $\mathcal W$ is a congruence distributive polynomial reduct of  
the variety of \emph{distributive} lattices, then $\mathcal W$ satisfies
 the equations \eqref{9} and \eqref{10} in  Theorem  \ref{pari}.
In other words, we may allow $\Theta$ to be a reflexive and admissible relation 
in the identities in (1) above.

(3)
If $\mathcal W$ is a congruence 
modular polynomial reduct of the variety of Boolean 
algebras, then 
either $\mathcal W$ has a majority term, or
$\mathcal W$ has a Maltsev  term, or
$\mathcal W$ interprets $\mathcal B$.
If in addition $\mathcal {W}$ is
congruence distributive, 
then $\mathcal W$ satisfies
 the equations \eqref{9} and \eqref{10} in  Theorem  \ref{pari}. 
\end{theorem}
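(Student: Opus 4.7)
I would address the three parts in order.

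For part (1), the plan is to revisit the positive direction of Proposition \ref{moregen}. Its proof constructs the chain $g_0=a,\ g_1=a(d+c_1),\ \dots,\ g_n=a\cdot d=h_0,\ \dots,\ h_n=d$ using only the absorption laws of lattice theory and the single ternary polynomial $b(x,y,z)=x(y+z)$; it never appeals to distributivity. What is therefore needed is that, for any congruence distributive polynomial reduct $\mathcal{W}=\mathcal{V}_P$ of a variety of lattices, an operation with the same congruence-theoretic behaviour as $b$ be available as a composition of members of $P$. This is essentially the content of \cite[Theorem 2]{B}: congruence distributivity of such a reduct forces $\mathcal W$ to interpret $\mathcal B$. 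Once $\mathcal W$ interprets $\mathcal B$, every algebra $\mathbf A\in\mathcal W$ carries a $\mathcal B$-reduct $\mathbf A'$ on the same underlying set, and every tolerance $\Theta$ on $\mathbf A$ is automatically a tolerance on $\mathbf A'$ (more operations impose more constraints, so the passage from $\mathbf A$ to $\mathbf A'$ only enlarges the class of tolerances). Therefore the tolerance identities that hold in $\mathcal B$ by Theorem \ref{pari} transfer to $\mathcal W$, yielding the asserted conclusion.

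For part (2), I would push the same interpretability argument further. In the distributive case, Proposition \ref{moregen} already delivers the identity with $T$ merely reflexive and admissible, since only lattice absorption identities are used in the chain construction, and the non-symmetric replacements of arguments inside the polynomials $a(d+c_1)(d+c_2)\cdots$ are all legitimate when the ambient lattice is distributive. The interpretation of $\mathcal B$ into $\mathcal W$, guaranteed again by Baker's theorem, therefore transfers the full statement of Theorem \ref{pari} (with $T$ reflexive and admissible) to $\mathcal W$; this explains why the upgrade from tolerances in (1) to admissible relations in (2) is possible precisely under distributivity.

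For part (3), the plan is to invoke Post's classification of clones on the two-element set. Enumerating those clones that generate a congruence modular variety, one finds by inspection of Post's lattice that each must contain at least one of three key terms: the majority term $m(x,y,z)=xy+xz+yz$, the affine ternary Maltsev term $x\oplus y\oplus z$, or Baker's $b(x,y,z)=x(y+z)$ (or its order dual). This yields the three-way dichotomy. For the congruence distributive addendum, congruence distributivity excludes the pure Maltsev branch: a congruence permutable and congruence distributive variety is arithmetical and so also has a majority term, collapsing that case into the first. The majority case is then handled by Remark \ref{fact}, which even gives the sharper $T(R\circ S)\subseteq TR\circ TS$; the $\mathcal B$-interpreting case is handled by Theorem \ref{pari}.

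The principal obstacle lies in parts (1) and (2): one must establish that Baker's ternary operation $b$, or something provably equivalent for the chain of Proposition \ref{moregen}, is always definable in a congruence distributive polynomial reduct of lattices, and in (2) one must additionally audit the chain to confirm that under distributivity none of the absorption steps secretly uses symmetry of the relation. Both are absorbed into the non-trivial content of \cite[Theorem 2]{B}, which is where congruence distributivity is used in an essential way. In part (3) the obstacle is more routine: a careful walk through Post's lattice to rule out congruence modular clones outside the stated list, combined with the small arithmeticity argument that eliminates the pure Maltsev branch under congruence distributivity.
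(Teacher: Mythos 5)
There is a genuine gap in your treatment of parts (1) and (2), and it is exactly the one the paper takes care to flag.

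Your central move in (1) is the assertion that congruence distributivity of the reduct ``forces $\mathcal W$ to interpret $\mathcal B$.'' That is stronger than what Baker's argument actually delivers. What one gets from \cite[Theorem 2]{B} (via the two-variable reduction to $(\mathbf C_2)_P$ and then to the variety $\mathcal W'$ generated by $(\mathbf C_2)_P$) is a ternary $\mathcal W$-term $t$ that is \emph{either} a majority term \emph{or} satisfies the Baker equations
$x=t(x,x,y)=t(x,y,x)$ and $t(x,y,y)=t(y,x,x)$.
That $t$ satisfies those four equations is weaker than $t$ giving an interpretation of $\mathcal B$, because the variety $\mathcal B$ is a generated variety and satisfies many more equations (lattice absorption identities, associativity of the induced semilattice, etc.). The paper explicitly warns: ``it is not necessarily the case that $t$ is interpreted as $xy+xz+yz$ or $x(y+z)$ throughout $\mathcal W$.'' Your argument, reduct-by-interpretation, would transfer \emph{all} of Theorem \ref{pari}, including the admissible-relation identity \eqref{9}; but this would make part (2) vacuous, so something must be wrong. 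The actual reason (1) only yields a tolerance identity is that the proof of \eqref{1} and \eqref{4} in Theorem \ref{bds} is carried out using \emph{only} the four Baker equations (and symmetry of $\alpha$ is removed by the Cz\'edli--Horv\'ath doubling trick), whereas the stronger admissible-relation chain in Proposition \ref{moregen} genuinely uses further lattice identities such as $a(d+c_1)(d+c_1)=a(d+c_1)$, which a bare term satisfying only \eqref{b} need not validate.

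For the same reason your explanation of why (2) works (``non-symmetric replacements are legitimate under distributivity'') misses the actual mechanism. Distributivity is used to get $\mathcal W=\mathcal W'$ (the variety is generated by $(\mathbf C_2)_P$, since every distributive lattice is a subdirect power of $\mathbf C_2$), which upgrades ``$t$ satisfies \eqref{b}'' to ``$t$ is \emph{literally} the median, the Baker term, or its dual throughout $\mathcal W$.'' Only with that identification are the lattice-absorption computations in Proposition \ref{moregen} available, and only then does the admissible-relation version \eqref{9}--\eqref{10} follow.

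Your part (3) is a legitimate alternative route: the paper works through directed Gumm terms and a direct analysis of $\{0,1\}$-valued ternary Boolean terms, while you appeal to Post's classification. Both are workable, though your version leaves the non-trivial inspection of Post's lattice unchecked. Your observation that congruence distributivity collapses the Maltsev branch into the majority branch via arithmeticity is correct and arguably cleaner than the paper's appeal to directed J\'onsson terms for the final sentence of (3).
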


 \begin{proof} 
(1) First, notice that in the proof that Baker's variety 
$\mathcal B$ satisfies  
the identities \eqref{1} and \eqref{4}
  in Theorem \ref{bds} 
we have only used the equations
\begin{equation}\labbel{b}     
x=b(x,x,y)=b(x,y,x) \quad \text{  and } \quad
b(x,y,y)=b(y,x,x)
  \end{equation}
and that, as we mentioned, the proof works
also when $\beta$ and $\gamma$ are admissible relations.
Using the idea from
Cz\'edli and Horv\'ath
\cite{CH}, an idea  we have already used above, we can replace $\alpha$ in 
\eqref{1} and \eqref{4} by a tolerance $\Theta$.
Indeed, if $\Theta$ 
is a tolerance and $a \mathrel \Theta d$, then from the
equation $x=b(x,y,x) $ we get
\begin{align*}
 b(a,c_{j}, d) =  & b( b(a,c_{k},  a),c_{j},  b(d,c_{k}, d)) \mathrel \Theta
\\[-6pt]
& b( b(a,c_{k}, \overset{|}{d}),
c_{j},  b(\stackrel{\raisebox{2pt}{\scriptsize$|$}}{a},c_{k}, d))
 = b(a,c_{k}, d),
 \end{align*}   
for all pairs of indices $j$ and $k$.
Notice that we have showed a little more than requested, namely, that all the elements
from the list in equation \eqref{B} in the proof of 
\ref{bds} are $\Theta$-related. 

Hence if a variety 
has a term satisfying \eqref{b},
then the conclusion in (1) holds.
We  shall show that a variety satisfying the assumptions
in (1) either has a majority term, or a term
satisfying \eqref{b}. 
The argument goes exactly as in 
the proof of \cite[Theorem 2]{B}, as we shall see. 
Since the equations \eqref{b}
depend only on two variables, then, for some given term $t$, 
they hold
in $\mathcal W$ if and only if they hold in the
free algebra $\mathbf F _{ \mathcal W } ( 2 ) $  in $\mathcal W$ generated by two elements. 
Suppose that $\mathcal W =\mathcal V_P$. Since the 
free lattice generated by two elements is 
$\mathbf C_2 \times \mathbf C_2$, where 
$\mathbf C_2= \{ 0,1 \} $ is the chain with two elements,
then   $\mathbf F _{ \mathcal W } ( 2 ) $  
is a (possibly improper) subalgebra of
$(\mathbf C_2)_P \times (\mathbf C_2)_P$.
Hence the equations \eqref{b} hold in $\mathcal W$ 
if and only if they hold in $(\mathbf C_2)_P$, if and only if 
they hold in  the variety $\mathcal W'$ generated by $(\mathbf C_2)_P$.
If $\mathcal W$ is congruence distributive, then 
$\mathcal W'$  is congruence distributive, too.
Now the  free algebra 
$\mathbf F _{ \mathcal W' } ( 3 ) $  
in $\mathcal W'$
generated by three elements $x$, $y$ and  $z$
can be seen as a subalgebra of 
the free distributive lattice generated by
$x$, $y$ and  $z$.
Baker \cite[proof of Theorem 2]{B}
shows that $\mathbf F _{ \mathcal W' } ( 3 ) $  
must contain either the median    
$xy+xz+yz$, or 
the Baker element
$x(y+z)$ or its dual.
These element are given by a 3-ary term
$t$ of $\mathcal W'$ and the above arguments show that
in the former case $t$ is a majority term for $\mathcal W$,
while in the latter case 
$t$ satisfies the equations 
\eqref{b}.   
Notice that it is not necessarily the case that 
$t$ is interpreted as  
$xy+xz+yz$ or 
$x(y+z)$ throughout $\mathcal W$,
we only get that $t$ is either a majority term or 
satisfies \eqref{b}.
However this is enough, by  
Fact \ref{fact} in the former case, and by the comment
in the first paragraph of the proof in the latter case.
Hence (1) is proved. 

(2) is a particular case of the last statement of (3),
however a direct proof along the lines of (1) is easy.
Under the additional assumption, we
can  argue directly in $\mathcal W$, 
rather than in $\mathcal W'$,
hence in the present case $t$ can be  actually interpreted as  
$xy+xz+yz$ or 
$x(y+z)$ or the dual throughout $\mathcal W$.
In the former case
Fact \ref{fact} is enough
and in the latter case the arguments in the proof of \eqref{nmg}
in Proposition \ref{moregen} 
  carry over.

(3)
Let us prove the first statement. If $\mathcal W$ is congruence modular,
then $\mathcal W$ has ternary \emph{directed Gumm terms},
as introduced in
Kazda,  Kozik, McKenzie, Moore
 \cite[p. 205]{adjt}.
See \cite[Theorem 1.1 (3)]{adjt}. 
We shall recall the equations that
directed Gumm terms  satisfy  
as soon as  needed. 
Obviously, a ternary term of $\mathcal W$ 
corresponds to a ternary Boolean term $t$,
 hence it is no loss of generality to assume that
$t(x,y,z)=a_1xyz+a_2xyz'+ a_3xy'z +\dots$,
where $'$ denotes complement and each 
$a_1$, $a_2$, \dots \ is either $0$ or $1$. 
The first term $d_1$ in the set of
directed Gumm terms  satisfies the equations $d_1(x,x,y)=x=d_1(x,y,x)$. 
Represent $d_1$ by a Boolean expression as above. 
By the first equation, the coefficients of $xyz $ and $   xyz'$
must be $1$ and the coefficients of  
$x'y'z $ and $  x'y'z'$ must be $0$.
By the second equation, the coefficients of     
$xyz $ and $  xy'z$ must be $1$ and  
the coefficients of 
$x'yz' $ and $ x'y'z'$ must be $0$.
Considering all the possibilities, 
one easily sees that $d_1$ is either the
majority term $xy+xz+yz$, or the Baker term, or the dual of 
the Baker term, or the first projection.
In all but the last case
we are done.
If $d_1$ is the first projection, 
then the equations for directed Gumm terms
give $d_2(x,x,y)=d_1(x,y,y)=x$
and   $x=d_2(x,y,x)$,
hence we can repeat the above argument for $d_2$.
Going on, if we either get a majority term, or a Baker term, or its dual,
we are done as above. Otherwise, all the $d_j$'s are first projections.
Then the remaining term $q$ in the set of directed Gumm terms
satisfies $q(x,x,y)=y$ and $q(x,y,y)=d_n(x,y,y)=x$,
hence $q$ is a Maltsev term for permutability.   

To prove the last statement, 
if $\mathcal W$ is congruence distributive, then we have \emph{directed
J{\'o}nsson terms}, to the effect 
that $q$ as above is the third projection
(or, simply, discard $q$ and ask for 
$d_n(x,y,y)=y$). 
Arguing as above, 
we get that  some $d_j$ satisfies all the equations
satisfied in distributive lattices  by either  the majority term or
by the Baker term,
hence,  again, either
Fact \ref{fact} or  the proof of \eqref{nmg}
apply. 
\end{proof}

We expect that \ref{prl}(2) might fail
if $\mathcal W= \mathcal V_P$,
when $\mathcal V$   is not the variety of distributive lattices.
In other words, we expect that 
(at least, without affecting the subscripts)
\ref{prl}(1) cannot be improved in such a 
way that admissible relations are taken into account everywhere.
However, we notice that in the proof of Theorem \ref{bds}
all the lattices we have considered are indeed distributive.
Hence, in view of \ref{prl}(2), in order to provide a counterexample that 
\ref{prl}(1) cannot be improved in the above sense, one should start with
a different and more complicated example, i.\ e., 
it is not enough to consider different lattice term operations
on the same set $B$ considered in the proof of 
\ref{bds}.

\smallskip 

{\scriptsize The author considers that it is highly  inappropriate, 
 and strongly discourages, the use of indicators extracted from the list below
  (even in aggregate forms in combination with similar lists)
  in decisions about individuals (job opportunities, career progressions etc.), 
 attributions of funds  and selections or evaluations of research projects. \par }

\end{document}